\definecolor{dkgreen}{rgb}{0,0.6,0}
\definecolor{gray}{rgb}{0.5,0.5,0.5}
\definecolor{mauve}{rgb}{0.58,0,0.82}
\newtheorem{theorem}{Theorem}[section]
\newtheorem{lemma}[theorem]{Lemma}
\newtheorem{proposition}[theorem]{Proposition}
\newtheorem{definition}[theorem]{Definition}
\begin{document}
\title
[\hfil A PDE model for stress propagation between two interconnected zones]{An advection-diffusion-reaction model for stress propagation between two interconnected zones}
\author[K. Khalil, I. L. Mikiela Ndzoumbou]
{K. Khalil$^{1,*}$, I. L. Mikiela Ndzoumbou$^1$}
  
\address{Kamal Khalil,\newline
LMAH, University of Le Havre Normandie, FR-CNRS-3335, ISCN, Le Havre 76600, France.}
\email{kamal.khalil.00@gmail.com}

\address{ Irmand Leblond MIKIELA NDZOUMBOU,\newline
LMAH, University of Le Havre Normandie, FR-CNRS-3335, ISCN, Le Havre 76600, France.}
\email{irmandmikiela2016@gmail.com}
\thanks{$^*$Corresponding author: K. Khalil; \texttt{kamal.khalil.00@gmail.com}}
\subjclass[2000]{34G20, 47D06}
\keywords{First-order macroscopic crowd models; network PDE model; stress; mathematical modeling; semigroup theory, control strategy.} 
{\renewcommand{\thefootnote}{} \footnote{
$^{1}$LMAH, University of Le Havre Normandie, FR-CNRS-3335, ISCN, Le Havre 76600, France.}}

\begin{abstract}
 In this work, we introduce a compartmental advection-diffusion network model to describe the propagation of stress in a population situated in two interconnected spatial zones during a disaster situation. The model accounts for interactions enabled by intrinsic transitions and imitations within each zone, as well as migrations between these zones. Using semigroup theory and abstract evolution equations, we prove the local existence, uniqueness, and regularity of the solutions. Additionally, we establish the positivity and $L^1$--boundedness of the solutions. Various numerical simulations are provided to illustrate different stress propagation scenarios, including the implementation of a local control strategy designed to minimize stress levels in a population facing a low-risk culture during a dangerous situation.
\end{abstract}
\maketitle
\tableofcontents
\section{Introduction}\label{intro}
Modeling the dynamics of stress propagation in disaster situations is crucial for effective emergency response planning and risk management. Recent works have extensively studied crowds under stress using mathematical modeling and differential equations. These studies range from microscopic modeling using systems of ODEs for low-density crowds to macroscopic modeling using PDEs for high-density crowds. For more details on these models, refer to recent works \cite{Bellomo0,Bellomo2,Gibelli,MauryBook,DiFrancesco,Liu-Zh-Hu,Masmoudi,K_Khalil} and references therein. The macroscopic approach we adopt in this paper considers a crowd as a locally entire quantity, without recognizing individual differences locally, and is therefore more suitable for studying the movement of an extremely large number of pedestrians. In particular, first-order macroscopic models, introduced by Hughes \cite{Hughes2002} (see also \cite{Coscia}), are based on a continuity-type equation and a density-velocity closure equation with suitable boundary conditions. Furthermore, several models are devoted to studying the dynamics of multiple pedestrian species in the context of macroscopic first-order systems, see \cite{Burger,Burger2,Pietschmann,DiFrancesco,Gomes,K_Khalil,Hughes2002,Rosini,Sim_Lan_Hug} and references therein.
 For instance, in \cite{K_Khalil}, the authors introduced a macroscopic first-order model describing stress (panic) propagation (in one spatial zone) by considering interactions between different human behaviors (alert, panic, and control) with advection, diffusion, intrinsic transitions, and imitation parameters and their impact to the propagation of stress in one spatial zone.  These models primarily focus on stress propagation and influence within a single spatial zone during emergency situations. 


In this paper, we present a macroscopic first-order compartmental advection-diffusion-reaction model that describes how populations in two separate interconnected zones $\Omega_1 $ and $\Omega_2 $, via their human behaviors (compartments) that represent the stressed and non-stressed populations respectively, interact via stress contagion within each zone and migration between these two zones, spatial propagation is also considered. More precisely, we consider two bounded zones $\Omega_1 $ and $\Omega_2 $ subsets of $\mathbb{R}^2$ such that $\partial \Omega_1 \cap \partial \Omega_2 =\emptyset$, with two compartments in each zone, one for the stressed population and the other for the non-stressed population.  The mathematical model includes, in each zone, diffusion, which represents the way in which people move randomly in all directions, and non-linear advection, which models the way in which people move closer to departure regions in order to migrate towards arrival regions. It also includes reaction terms (linear and non-linear) that describe intrinsic transitions in human behaviors that reflect natural reductions or increases in stress levels in pedestrians without taking into account interactions between the two human behaviors (stress and non-stress), while imitation refers to the way stress is propagated by eye contact (in the presence of interactions). This modeling approach has been described in \cite{K_Khalil} to describe the spatial propagation of stress by interactions with other human behaviors in one spatial zone, see also references therein for similar reaction-diffusion models. However, in this article, we also consider migration between the two zones, meaning that the two zones are linked by migration, allowing part of the population to move from one zone to the other one in both directions, taking into account the capacity of each zone. Each zone includes one (or more) departure and reception region(s) to facilitate migration between zones. These may be two countries, two cities or two districts linked by road, rail or air. Departure and arrival areas, such as railway stations or airports, this model is illustrated  well in Figure \ref{alea}. This type of interconnected coupling between two zones follows the approaches described in \cite{Irmand,freedman1999global} in the case of reaction-diffusion systems. A key feature of our model is that migration is treated as an internal connection, not by a boundary condition. We also use nonlinear advection along with diffusion to show how movement of pedestrians spreads in a dangerous situation, rather than just using diffusion alone, which makes the model unique and suitable for macroscopic crowd theory. The term "network" is used because the two zones act like nodes in the model.

Using semigroup theory and abstract evolution equations (see \cite{Lunardi,Nagel,Pazy}), we establish the local existence, uniqueness and regularity of our model's solutions. We also prove positivity using results on invariant sets and a subtangential type condition for advection-diffusion-reaction models due to H. Amann \cite{Amann2}. Finally, we provide a $L^1$--boundedness result for solutions showing that the total mass of the population is conserved.

Furthermore, to illustrate our mathematical model, we present several numerical simulations that show how stress propagates in a population with a low risk culture during a dangerous situation. The focus is on one-way movements, where people move from the more dangerous zone 1 to the safer zone 2. This allows us to see how stress moves from one zone to another through migration, and its impact on human behaviors on the environment. We are also looking at ways of controlling stress in the network. Two control scenarios are tested: in the first, controls are placed in the departure region of zone 1, and in the second, controls are applied in the arrival region of zone 2. These controls are designed to reduce and manage stress. The simulations first show how stress propagates in zone 1, then how it affects zone 2 through migration. Next, we show how the first control scenario reduces stress levels in zone 1 and its effect on zone 2. Finally, we show how the second control scenario reduces stress in zone 2. A comparison between the two control scenarios with respect to uncontrolled model is also provided (see Figure \ref{Others behaviors}).

The rest of the paper is organized as follows. Section \ref{sec:2} introduces the extended model incorporating interactions between two separate zones through migration. Before presenting our final model \eqref{Main Model}, we first describe, in \eqref{Model1}, the dynamics of a crowd in each zone separately, accounting for transition and imitation phenomena. We then examine migration (coupling terms) between the two zones, as outlined in model \eqref{Main Model}. Section \ref{sec:3} presents the mathematical analysis, establishing the local existence, uniqueness, and regularity of the solutions and well as the positivity and the $L^1$--boundedness of solutions. Section \ref{sec:4} provides various numerical simulations to illustrate different stress propagation scenarios. Here, we present the two scenarios with a local control strategy in order to minimize the levels of stress density in the network. Section \ref{sec:5} offers concluding remarks and potential directions for future research.

\section{A network PDE model of a population in stress}\label{sec:2}
In this section, we present a compartmental advection-diffusion model to describe stress propagation in two interconnected spatial zones. We begin by describing the model \eqref{Model1}, which describes the evolution of stress in each individual zone, thus capturing the internal dynamics of each zone. 
We then introduce migration terms (coupling) between the two zones, resulting in the main model \eqref{Main Model}, which describes the global dynamics in the network described by $\Omega_1$ and $\Omega_2$.

\subsection{Model type for a population in one zone}
We consider a model representing the evolution of stress within a population. In this model, each individual in the population can be in one of two states: stressed or not stressed. It is assumed that an individual can enter a state of stress through two main mechanisms.\\
The first mechanism is related to emotional contagion. It describes the influence of the group or crowd on an individual. Thus, a person may adopt a state of stress in response to the stress perceived in others around them. This phenomenon reflects the power of social dynamics, where stress spreads from one individual to another through emotional and psychological interactions, creating a chain reaction within the population.\\
The second mechanism is intrinsic to the individual and depends on their personal characteristics. These characteristics may include factors such as culture, physical and mental health, personal history, or stress management abilities. For example, an individual with a genetic predisposition to anxiety or low resilience to stressful events might develop stress regardless of external influence.\\
By combining these two mechanisms with the spatial propagation as diffusion and advection from first-order macroscopic theory, the model aims to capture the complex dynamics of stress evolution within a population, accounting for both social interactions and individual traits with the spatial propagation. This model could therefore provide a better understanding of collective stress waves and individual vulnerability to stress. \\
The following model represents the evolution of a stressed population density $u_P$ and an unstressed one $u_N$ in a crowd in each single spatial bounded zone $ \Omega \subset \mathbb{R}^2$ where $\partial\Omega$ is its associated boundary. 
\begin{equation} \label{Model1}
	\begin{aligned}
		\begin{cases}
			\dfrac{\partial u_P}{\partial t}=d_P\Delta u_P-  \nabla \cdot (\vec{v}_P(u) u_P) + a \   u_N -bu_P+f(u_P,u_N)  u_N   u_P , &\; t>0, \ x \in \Omega,\\[4mm]
			\dfrac{\partial u_N}{\partial t}=d_N\Delta u_N-\nabla \cdot (\vec{v}_N(u) u_N) - a \  u_N +bu_P-f(u_P,u_N)u_N  u_P , &\; t>0, \ x \in \Omega,\\[4mm]
			\partial_{\vec{\eta}} u_P=\partial_{\vec{\eta}} u_N=0, &\; t>0, \, x \in \partial\Omega, \\[4mm] 
			u_P(0,x)=u_{P,0}(x), \  u_N(0,x)=u_{N,0}(x), &\; x \in \Omega,
		\end{cases}
	\end{aligned}
\end{equation}
where $u_P:= u_P (t,x)$ is the instantaneous local density of pedestrians in stress and $u_N:= u_N (t,x)$ is the one of non-stressed pedestrians, so that, $u=u_P+u_N$ is the accumulated local density in $\Omega$. Moreover, the parameters of this model are described as follows:
\begin{itemize}
	\item  \noindent \textbf{Diffusion.} The diffusion terms are given by $d_P\Delta u_P$ for the population in stress and $ d_N\Delta u_N $ is that of non-stress population, where $d_P > 0$ is the diffusion rate for the stressed population and $d_N > 0$ is that for the unstressed population. We assume that stressed pedestrians diffuse faster than unstressed pedestrians, i.e. $d_P > d_N$, this assumption was introduced by our definition of stress since stressed pedestrians can move faster in different senses with respect to non-stressed pedestrians, see \cite{K_Khalil} for a similar assumption.\\
	\item \noindent  \textbf{Advection.} The advection terms are given by $-  \nabla \cdot (\vec{v}_P(u) u_P)$ for stressed population and $ -\nabla \cdot (\vec{v}_N(u) u_N)  $ for unstressed population, where the local speed vectors $ \vec{v}_i:=\vec{v}_i(x,u) $ for $i \in \{ P, N\} $ such that $$ \vec{v}_i(x,u)= v_i(u) \vec{\nu}_i(x),  \quad x \in \Omega, $$  
	where $v_i (u)$ is the maximal density-dependent speed scalar function and $ \vec{\nu}_i(x) $ is the desired direction of movement respectively for pedestrians $i$, to the escape region inside $\Omega$, where $i\in \{ P, N\} $. Different types of speed-density functions are considered in the literature (see \cite{Buchmueller,Coscia,CristianiPiccoliTosin,Seyfried2005}). Here, we  choose a linear dependence:
	$$ 
	v_i (u ) = v_{i,\max}\left( 1- u\right)\quad \text{ for } i=P,N,
	$$
	where $v_{P,\max}$ and $v_{N,\max}$ are two positive constants such that $v_{P,\max} > v_{N,\max}$ which means that the maximal speed of population in stress is greater than that in non-stress. Similar assumptions on the panic and the control maximum speeds are used in \cite{CristianiPiccoliTosin,Liu-Zh-Hu}.
	\item \textbf{The intrinsic transitions.} It is given by the parameters $a, b>0$, which describe behavioral transitions that depend on individual properties (past experience, level of risk culture, etc.). The coefficient $a$ gives the rate of transition from non-stress behavior to stress behavior, while $b$ represents the rate of transition from non-stress behavior to stress behavior.
	
	\item \textbf{The imitation (or contagion) phenomenon.} Individuals have a tendency to imitate the behaviors of those around them, which means that stress and non-stress are both contagious (through eye contact). We are following the principle of dominant behavior here, i.e., the most adopted behavior is the most imitated. Thus, imitation between stress and non-stress behaviors depends on the ratio of the local densities of the two populations around. Hence, the imitation term is given by: 
	\begin{equation}
		f(u_P,u_N) = \alpha_P \xi \left( \frac{u_P}{u_N+\epsilon} \right) - \alpha_N \xi \left( \frac{u_N}{u_P+\epsilon} \right),
	\end{equation}
	where $\alpha_P \geq  0$, is the rate of imitation from stress to non-stress behavior and $\alpha_N \geq  0$, is the rate of imitation from non-stress to stress behavior. The coefficient $\epsilon \ll 1$ is fixed very small only to avoid singularities, while the function $\xi $ is defined as follows:
	\begin{equation}\label{xi0}
		\xi(s) = \frac{s^2}{1+s^2}, \quad s\in \mathbb{R}.
	\end{equation}
	
	\begin{figure}[h]
		\centering
		\begin{tikzpicture}
			\begin{axis}[
				xlabel=$s$,
				ylabel=$\xi(s)$,
				legend pos=north east,
				grid=both,
				axis lines=middle,
				width=0.6\textwidth,
				height=6cm,
				samples=100,
				domain=0:5
				]
				\addplot[blue, thick, domain=0:5] {exp(x^2)/(1+exp(x^2)};
				\legend{$\xi(s)$}
			\end{axis}
		\end{tikzpicture}
		\caption{Representative graph of the function $\xi$.}
		\label{fig: xi-1}
	\end{figure}
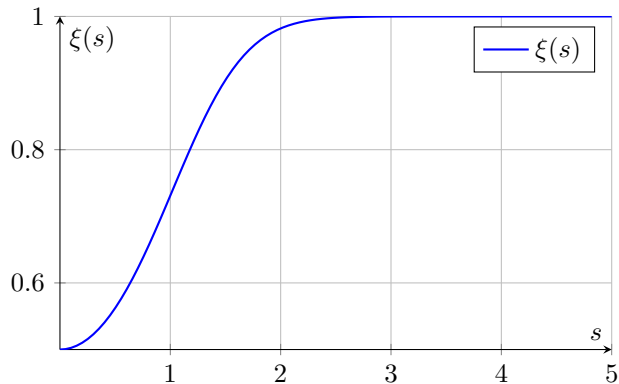
	\item \textbf{Boundary and initial conditions.} The Neumann boundary conditions $$\partial_{\vec{\eta}} u_P=\partial_{\vec{\eta}} u_N =0 $$ indicate that there is no flux of pedestrians across the boundary $\partial \Omega$, where $\partial_{\vec{\eta}} u_i:= \nabla u_i \cdot \vec{\eta}$ for $i\in \{P,N\}$ and $\vec{\eta} $ is the outer normal vector at the boundary $\partial\Omega$. The initial conditions represent the initial population with respect to each human behavior and are given by $u_P(0,\cdot)=u_{P,0}(\cdot)$ and $u_N(0,\cdot)= u_{N,0}(\cdot)$ the accumulated initial population under study. Therefore, by assumptions, we have $$ \int_{\Omega} (u_{P,0}(x) +u_{N,0}(x)) dx = 1 .$$
\end{itemize}
The dynamics in \eqref{Model1} are driven by both diffusive and advective processes, along with the effects of stress transitions  and imitation (contagion). This model can be derived using as in \cite{K_Khalil,Coscia,Hughes2002} and references therein and provides a basis for understanding stress propagation in a single spatial zone. Next, we extend this model to account for interactions between two separate zones through migration.
\subsection{The main network PDE model between two separate zones}
In this section, we examine the evolution of stress in two interconnected spatial zones. The model provides a detailed description of the spread of stress within each zone while accounting for the interactions between the stressed and non-stressed populations, as represented in the model \eqref{Model1}. The  migration (or coupling) between the zones is incorporated through specific terms in the equations, representing the flows of influence between the populations. Let \( \Omega_1 \) and \( \Omega_2 \) be two distinct zones, i.e., $ \partial\Omega_1 \cap \partial\Omega_2=\emptyset$ with associated boundary parts $ \partial \Omega_1 $ and $ \partial \Omega_2 $ respectively. The coupled model for two zones is described by the following system of equations:



\begin{equation}\label{Main Model}
	\begin{aligned}
		\begin{cases}
			\begin{cases}
				&\dfrac{\partial u_{P_1}}{\partial t} = d_{P_1}\Delta u_{P_1} -\nabla \cdot (\vec{v}_{P_1}(u_{P_1})  u_{P_1}) + a_1\  u_{N_1} - b_1 u_{P_1} + f_1(u_{P_1}, u_{N_1})  u_{N_1}  u_{P_1} + f_{21}(u_{P_1}, u_{P_2}), \\[4mm]
				&\dfrac{\partial u_{N_1}}{\partial t} = d_{N_1}\Delta u_{N_1} - \nabla \cdot (\vec{v}_{N_1}  u_{N_1}) - a_1  \ u_{N_1} + b_1 u_{P_1} - f_1(u_{P_1}, u_{N_1}) u_{N_1} u_{P_1} + f_{21}(u_{N_1}, u_{N_2}), \\[4mm]
				&  \partial_{\vec{\eta}} u_{P_1} = \partial_{\vec{\eta}} u_{N_1} = 0, \quad x \in \partial \Omega_1, \\[4mm]
				&u_{P_1}(0) = u_{P_1,0}(x), \quad  u_{N_1}(0) = u_{N_1,0}(x), \quad x \in \Omega_1
			\end{cases} \\[1cm]
			\begin{cases}
				&\dfrac{\partial u_{P_2}}{\partial t} = d_{P_2}\Delta u_{P_2} - \nabla \cdot (\vec{v}_{P_2}  u_{P_2}) + a_2 \ u_{N_2} - b_2 u_{P_2} + f_2(u_{P_2}, u_{N_2})  u_{N_2}  u_{P_2} + f_{12}(x,u_{P_2}, u_{P_1}), \\[4mm]
				&\dfrac{\partial u_{N_2}}{\partial t} = d_{N_2}\Delta u_{N_2} - \nabla \cdot (\vec{v}_{N_2}  u_{N_2}) - a_2 \ u_{N_2} + b_2 u_{P_2} - f_2(u_{P_2}, u_{N_2}) u_{N_2} u_{P_2} + f_{12}(y,u_{N_2}, u_{N_1}), \\[4mm]
				& \partial_{\vec{\eta}} u_{P_2} = \partial_{\vec{\eta}} u_{N_2} = 0, \quad y \in \partial \Omega_2, \\[4mm]
				&u_{P_2}(0) = u_{P_2,0}(y), \quad  u_{N_2}(0) = u_{N_2,0}(y), \quad y \in \Omega_2
			\end{cases}
		\end{cases}
	\end{aligned}
\end{equation}
where the terms are given by:
\begin{itemize}
	\item \( u_{P_1} \) and \( u_{P_2} \): Stressed population densities in zones \( \Omega_1 \) and \( \Omega_2 \), respectively.
	\item \( u_{N_1} \) and \( u_{N_2} \): Non-stressed populations densities in zones \( \Omega_1 \) and \( \Omega_2 \), respectively.
	\item  \( u_1(t) = u_{P_1}(t) + u_{N_1}(t) \) and \( u_2(t) = u_{P_2}(t) + u_{N_2}(t) \):  The accumulated population densities in zones \( \Omega_1 \) and \( \Omega_2 \), respectively.
	\item \( d_{P_1} \) and \( d_{P_2} \): Diffusion coefficients for stressed population in zones \( \Omega_1 \) and \( \Omega_2 \).
	\item \( d_{N_1} \) and \( d_{N_2} \): Diffusion coefficients for the non-stressed population in zones \( \Omega_1 \) and \( \Omega_2 \).
	\item \( \vec{v}_{P_1} \) and \( \vec{v}_{P_2} \): Advection velocities for stress in zones \( \Omega_1 \) and \( \Omega_2 \).
	\item \( \vec{v}_{N_1} \) and \( \vec{v}_{N_2} \): Advection velocities for the non-stressed population in zones \( \Omega_1 \) and \( \Omega_2 \).
	\item \( a_1, b_1\): Intrinsic transitions terms in \( \Omega_1 \).
	\item \( a_2, b_2\): Intrinsic transitions terms in \( \Omega_2 \).
	\item \( f_1, f_2\): Imitation terms in zones \( \Omega_1 \) and \( \Omega_2 \), respectively.
	\item \( f_{12}, f_{21} \): Local migration terms of one zone on the other.
\end{itemize}
The terms diffusion, advection, intrinsic transitions and imitation in each zone $\Omega_i$, $i=1,2$ are described as in the \eqref{Model1} model. To complete this model, we introduce migration (or coupling) terms between the two zones $ \Omega_1$ and $\Omega_2$:
\begin{itemize}
	\item \textbf{Migration (or coupling).}
	The coupling terms represent the migration between zones $\Omega_1$ and $\Omega_2$ in both directions, for stressed and unstressed populations, expressed by \( f_{21} \) and \( f_{12} \). For \( (u_i,u_j)=(u_{N_i},u_{N_j}) \) or \( (u_i,u_j)=(u_{P_i},u_{P_j}) \), \( i,j \in \{1,2 \}, \, i \neq j \), these terms are defined as follows:
	
	\[
	f_{ji}(x,u_i, u_j) = -m_{ji} \mathbf{p}_{ji}(x) u_i  + \varepsilon_i(x) \ m_{ij}\int_{\Omega_i} \mathbf{p}_{ji}(y) u_j(y) \,dy, \; x\in \Omega_i, i,j \in \{1,2 \}, \, i \neq j,
	\]
	where \( f_{ji}(x,u_i, u_j) \) is the local migration term from zone \( \Omega_j \) to \( \Omega_i \) represents the flux balance between the quantity of individuals who have left $\Omega_i$ from location $x$, and the quantity of individuals coming from $\Omega_j$ towards $\Omega_i$ who have arrived at location $x$ within $\Omega_i$. The term $-m_{ji} \mathbf{p}_{ji}(x) u_i$ models the flow of pedestrians leaving $\Omega_i$ from $x$ and the term $ \varepsilon_i(x) \ m_{ij}\int_{\Omega_i} \mathbf{p}_{ji}(y) u_j(y) \,dy $ models the flow of pedestrians entering $\Omega_i$ from $\Omega_j$ and received at location $x$. The parameters are given as follows
	\begin{itemize}
		\item \( m_{ji} \): Maximum proportion of individuals from $\Omega_i$ who can migrate to $\Omega_j$.
		\item \( \mathbf{p}_{ji}(x) \): Probability of leaving \(\Omega_i\) from position \(x\) to \(\Omega_j\). It is assumed to be a continuous function on $\Omega_j$. 
		\item \( \varepsilon_i(x) \): Probability of being received at location \( x \) in $\Omega_i$. It is assumed to be a continuous function on $\Omega_i$.
	\end{itemize}
	
	For example, the migration term for the non-stressed population from $\Omega_1$ to $\Omega_2$ is given by
	\[
	f_{21}(u_{N_1}, u_{N_2}) = -m_{21} u_{N_2} \mathbf{p}_{12}(y) + \varepsilon_2(y) m_{12} \int_{\Omega_1} u_{N_1} \mathbf{p}_{21}(x) \, dx.
	\]
Explicit examples of these functions are given below in Section \ref{sec:4}.
\end{itemize}

\begin{figure}[h]\label{diagram}
	\centering
	\begin{tikzpicture}[scale=0.5]
		\draw[fill=yellow!20] (-7, -5) rectangle (3, 5);
		
		\draw[fill=blue!30] (-2,3) circle (1.5cm) node[above] {\textit{\bf$u_{P_1}$}};
		\draw[fill=red!30] (-2,-3) circle (1.5cm) node[below] {\textit{\bf$u_{N_1}$}};
		
		\draw[->, thick, red] (-3.5,3) to[bend right=60] node[midway, left] {$b_1$} (-3.5,-3);
		
		\draw[<-, thick, red] (-0.5,3) to[bend right=-60] node[midway, right] {$a_1$} (-0.5,-3);
		
		\draw[dashed,<->, thick, red] (-2,1.5) to[bend left=0] node[midway, right] {$f_1$} (-2,-1.5);
		
		\node at (-2,-6) {\bf Zone 1 $(\Omega_1)$};
		
		\draw[fill=green!20] (8, -5) rectangle (18, 5);
		
		\draw[fill=blue!30] (13,3) circle (1.5cm) node[above] { \textit{\bf$u_{P_2}$}};
		\draw[fill=red!30] (13,-3) circle (1.5cm) node[below] {\textit{\bf$u_{N_2}$}};
		
		\draw[<-, thick, red] (14.5,3) to[bend right=-60] node[midway, left] {$a_2$} (14.5,-3);
		
		\draw[->, thick, red] (11.5,3) to[bend right=60] node[midway, right] {$b_2$} (11.5,-3);
		
		\draw[dashed,->, thick, blue] (-2,4.5) to[bend right=-20](13,4.5);
		\draw[dashed,<-, thick, blue] (-2,-4.5) to[bend right=20](13,-4.5);
		\draw[dashed,<-, thick, blue] (-1.7,1.5) to[bend right=20](12.7,1.5);
		\draw[dashed,->, thick, blue] (-1.7,-1.5) to[bend right=-20](12.7,-1.5);
		
		\draw[dashed,<->, thick, red] (13,1.5) to[bend left=0] node[midway, right] {$f_2$} (13,-1.5);
		
		\node at (13,-6) {\bf Zone 2 $(\Omega_2)$};
		\node at (6,6.5) {\bf \color{blue}$f_{21}$ };
		\node at (6,-6.5) {\bf \color{blue} $f_{12}$ };
		\node at (6,1) {\bf \color{blue} $f_{12}$ };
		\node at (6,-1.) {\bf \color{blue}  $f_{21}$ };
	\end{tikzpicture}
	\caption{ Flow diagram for model \eqref{Main Model}. In red, flows generated by territorial interactions: underlining in solid red lines the intrinsic transitions between states of stress and non-stress, and in dashed red lines the flows due to imitation. In blue, migration-related flows.}
	\label{alea}
\end{figure}
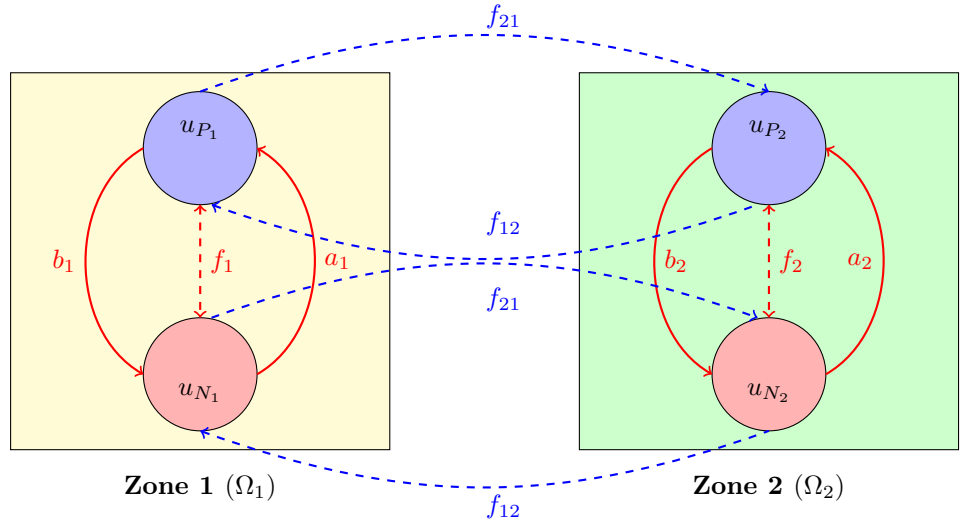
\begin{table}[h!]

	\caption{Functions in the model \eqref{Main Model}}
	
	\label{tab:functions-systeme-ACP}
	
	\centering
	
	\begin{tabular}{l| l }
		
		\textbf{Functions}            &    \textbf{Notation}    \\
		
		\hline
		Imitation function in $\Omega_1 $                 &        $f_{1}$        \\
		
		Imitation function in $\Omega_2 $     &        $f_{2}$    \\
		
		Migration function from $\Omega_1 $ to $\Omega_2$        &    $f_{21}$          \\
		
		Migration function from $\Omega_2 $ to $\Omega_1 $       &    $f_{12}$         \\
		
		Probability of being received at location \(x\) in $\Omega_1$      &    $\varepsilon_{1}(x)$  \\
		
		Probability of being received at location \(y\) in $\Omega_2$      &    $\varepsilon_{2}(y)$    \\
		
		Probability of leaving \(\Omega_1\) from position \(x\) to \(\Omega_2\)  &  \(\mathbf{p}_{21}(x)\) \\
		
		Probability of leaving \(\Omega_2\) from position \(x\) to \(\Omega_1\)  &  \(\mathbf{p}_{12}(y)\) \\ 
		
		The desired direction of pedestrians in $\Omega_1$ from position \(x\) to the exit area & $\vec{\nu}_1(x)$ \\
		
		The desired direction of pedestrians in $\Omega_2$ from position \(y\) to the exit area & $\vec{\nu}_2(y)$ \\

	\end{tabular}
	
\end{table}

%
%
%
%
%
%
%
%
%
%
%
%
%
%
%
%
%
%
%
%




\begin{table}[h!]
	\caption{Parameters of the model \eqref{Main Model}}
	\label{tab:parameters-systeme-ACP}
	\centering
	\begin{tabular}{p{0.35\linewidth}|p{0.25\linewidth}|p{0.25\linewidth}}
		\textbf{Parameters} & \textbf{Notation for zone 1} & \textbf{Notation for zone 2} \\
		\hline
		Diffusion parameters for stressed & $d_{P_1}$ & $d_{P_2}$ \\
		Diffusion parameters for non-stressed & $d_{N_1}$ & $d_{N_2}$ \\
		Maximal speed paramters for stressed & $v_{P_1,max}$ & $v_{P_2,max}$ \\
		Maximal speed paramters for non-stressed & $v_{N_1,max}$ & $v_{N_2,max}$ \\
		Intrinsic evolution from stress to non-stress behaviors & $a_1$ & $a_2$ \\
		Intrinsic evolution from non-stress to stress behaviors & $b_1$ & $b_2$ \\
		Imitation from non-stress to non-stress & $\alpha_{P_1}$ & $\alpha_{P_2}$ \\
		Imitation from stress to non-stress & $\alpha_{N_1}$ & $\alpha_{N_2}$ \\
		Maximum proportion of individuals from $\Omega_i$ who can migrate to $\Omega_j$ & $m_{21}$ & $m_{12}$ \\
	\end{tabular}
\end{table}

\section{Local existence, positivity, and $L^1$--boundedness of the spatio-temporal model \eqref{Main Model}}\label{sec:3} 
In this section, we prove the well-posedness of the spatio-temporal model \eqref{Main Model} introduced previously. Then, we establish the positivity of the solutions and the $L^1$-boundedness of the population densities.

\subsection{The abstract formulation and the associated boundary value Cauchy problem}   
To study the existence and uniqueness of solutions to the system \eqref{Main Model}, we use the abstract formulation and semigroup theory \cite{Nagel,Pazy}. In order to do that, for $ p>2 $, we define the Banach space $ X:= L^p(\Omega_1)^{2} \times L^p(\Omega_2)^{2} $, the product of the Lebesgue spaces of order $p$,  equipped with the following norm 
\[ 
\| \varphi:=(\varphi_1,\varphi_2,\varphi_3,\varphi_4)^{*}\|:=\sum_{i=1}^{4} \| \varphi_i \|_p,
\]
where $\|\cdot \|_p$ is the usual norm in $L^p(\Omega_j)$, $j=1,2 $ and $^*$ designates the vector transpose. 
Moreover, we define the linear closed operator $  (A,D(A )) $ on $X$ by
\begin{equation}
	\left\{
	\begin{aligned}
		A
		&=
		\text{diag}( d_{P_1}  \Delta, d_{N_1}  \Delta, d_{P_2}  \Delta, d_{N_2}  \Delta)\\   
		D(A )& = \lbrace \varphi \in W^{2,p}(\Omega_1)^{2} \times W^{2,p}(\Omega_2)^{2} : \partial_{\vec{\eta_j}} \varphi_i =0\rbrace.
	\end{aligned} \label{Operator (A,D(A))}
	\right. 
\end{equation} 

The nonlinear function $F:[0,\infty)\times X_{\alpha}\longrightarrow X$ is defined by 
\[
F(t,\varphi )(x) = \left\{
\begin{aligned}
	F_{1}(x,\varphi_1(x), \nabla \varphi(x))\; x\in \Omega_1\\
	F_{2}(x,\varphi_2(x), \nabla \varphi(x)), \; x\in \Omega_1\\
	F_{3}(x,\varphi_3(x), \nabla \varphi(x)), \; x\in \Omega_2\\
	F_{4}(x,\varphi_4(x), \nabla \varphi(x)), \; x\in \Omega_2,
\end{aligned}
\right. 
\]
\begin{equation}\label{The function F}
\left\{
\begin{aligned}
	F_{1}(x,\varphi_1(x), \nabla \varphi(x))&= a_1\varphi_2(x) - b_1\varphi_1(x) + [f_1(\varphi_1,\varphi_2)\varphi_1\varphi_2](x) + f_{21}(\cdot,\varphi_1,\varphi_3)(x) \\
	&\quad -  \nabla \cdot (\varphi_1 \vec{v}_{P_1} (\varphi_1+\varphi_2)) (x),\; x\in \Omega_1,\\
	F_{2}(x,\varphi_2(x), \nabla \varphi(x))&= - a_1\varphi_2(x) + b_1\varphi_1(x) - [f_1(\varphi_1,\varphi_2)\varphi_1\varphi_2](x) + f_{21}(\cdot,\varphi_2,\varphi_4)(x) \\
	&\quad -  \nabla \cdot (\varphi_2 \vec{v}_{N_1}(\varphi_1+\varphi_2)) (x),\; x\in \Omega_1, \\
	F_{3}(x,\varphi_3(x), \nabla \varphi(x))&= a_2\varphi_4(x) - b_2\varphi_3(x) + [f_2(\varphi_3,\varphi_4)\varphi_3\varphi_4](x) + f_{12}(\cdot,\varphi_3,\varphi_1) (x) \\
	&\quad - \nabla \cdot (\varphi_3 \vec{v}_{P_2} (\varphi_3+\varphi_4)) (x),\; x\in \Omega_2,\\
	F_{4}(x,\varphi_4(x), \nabla \varphi(x))&= - a_2\varphi_4 (x)+ b_2\varphi_3(x) - [f_2(\varphi_3,\varphi_4)\varphi_3\varphi_4] (x)+ f_{12}(\cdot,\varphi_4,\varphi_2)(x) \\
	&\quad - \nabla \cdot (\varphi_4 \vec{v}_{N_2}(\varphi_3+\varphi_4)) (x),\; x\in \Omega_2.
\end{aligned}
\right.
\end{equation}
and $X_{\alpha}:=\lbrace \varphi \in W^{2\alpha,p}(\Omega_1)^{2} \times W^{2\alpha,p}(\Omega_2)^{2}:  \partial_{n}  \varphi_{i|\partial \Omega} =0  \rbrace$ for some (fixed) $\alpha \in (1/p+1/2,1)$ equipped with the norm 
\[ 
\|\varphi \|_{\alpha}:= \|\varphi\|+\|\nabla \varphi\|+[\varphi]_{\zeta} 
\] 
where 
\[  
[\varphi]_{\zeta}:=\sum_{i=1}^2 \left( \int_{\Omega_1 \times \Omega_1} \dfrac{|\varphi_i(x)-\varphi_i(y) |^p}{|x-y |^{2+p\zeta}} dx dy\right)^{1/p}+\sum_{i=3}^4\left( \int_{\Omega_2 \times \Omega_2} \dfrac{|\varphi_i(x)-\varphi_i(y) |^p}{|x-y |^{2+p\zeta}} dx dy\right)^{1/p},
\] 

$ \zeta=2\alpha -1.$

defines a norm on $X_{\alpha}$ which make it a Banach space. Then by construction, we have the following usefel embedding:
\begin{equation}\label{embedding}
	X_{\alpha}\hookrightarrow  C^1(\overline{\Omega}_1)^{2} \times C^1(\overline{\Omega}_2)^{2}.
\end{equation}

Henceforth, \eqref{Main Model} can be formulated as the following abstract Cauchy problem in $X$:
\begin{equation}\label{Abstract model}
	\left\{
	\begin{aligned}
		\dfrac{d U(t)}{dt} &=A U(t) + F(t,U(t) ), \quad t>0, \,  \\
		U(0) & = U_0 \in X, \\
	\end{aligned} 
	\right. 
\end{equation} 
where $U(t)=(u_{N_1}(t),u_{P_1}(t),u_{N_2}(t),u_{P_2}(t))$ and  the initial condition $U_0=(u_{N_1,0},u_{P_1,0},u_{N_2,0},u_{P_2,0})$ represents the initial population densities of the populations $P_1$, $N_1$, $P_2$, and $N_2$ respectively.
\subsection{Prelimanry results} In this section, we introduce some preliminary results necessary for the main mathematical results of the sequel.

\begin{definition} \label{Positive cones}
	We define the positive cone $X^+$ of $X$ by:
	$$ X^+:=\lbrace \varphi \in L^{p}(\Omega_1)^{2} \times L^{p}(\Omega_2)^{2} :  \varphi_i\geq 0 \; a.e., \; i=1,\cdots, 4 \rbrace.$$
	Hence, the positive cone of $X_{\alpha}$ is given by $$ X_{\alpha}^+:=X^{+}\cap X_{\alpha} .$$
\end{definition}

A semigroup $(e^{t A}(t))_{t\geq 0}$, is said to be positive in $X$, if and only if, for every $ \varphi  \in X^+ $,  implies $ e^{t A}(t) \varphi \in X^+ $ for all $ t\geq 0$.\\

Now, using the generation result of the $L^p$--realization of the Laplacian operator due to Davies \cite{Davies}, we give the generation result of the operator $(A,D(A))$ on $X$. 
\begin{proposition}\label{Proposition semigroup}
The operator $(A,D(A)) $ generates a positive holomorphic $C_0$-semigroup $(e^{tA})_{t\geq 0} \subset \mathcal{L}(X) $ on $X$.
\end{proposition}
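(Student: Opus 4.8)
The plan is to exploit the fully decoupled (block-diagonal) structure of $A$ and reduce the claim to the scalar Neumann--Laplacian generation result of Davies cited above. Since $A=\text{diag}(d_{P_1}\Delta, d_{N_1}\Delta, d_{P_2}\Delta, d_{N_2}\Delta)$ acts componentwise, with the first two components supported on $\Omega_1$ and the last two on $\Omega_2$, and since the domain $D(A)$ is exactly the product of the individual Neumann domains, the operator $A$ is the direct sum of four scalar operators $A_k:=d_k\Delta$ on $L^p(\Omega_{j(k)})$ with homogeneous Neumann boundary conditions. It therefore suffices to establish generation, positivity and holomorphy for each scalar block and then to transfer these properties to the product $X$ via the diagonal structure.

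First I would invoke Davies' theorem: for each bounded zone $\Omega_j$ (with sufficiently regular boundary) and each $p>2$, the $L^p$-realization of the Neumann Laplacian $\Delta_{N,j}$ generates a positive holomorphic $C_0$-semigroup $(e^{t\Delta_{N,j}})_{t\geq 0}$ on $L^p(\Omega_j)$. Because each diffusion coefficient $d_k$ is a fixed positive constant, the rescaled operator $d_k\Delta_{N,j}$ generates the semigroup $e^{t d_k\Delta_{N,j}}$ obtained by the time change $t\mapsto d_k t$; positivity is preserved under this positive rescaling, the generator remains sectorial with the same angle of holomorphy (scaling by a positive real multiplies the spectrum radially and so leaves the sector invariant), and strong continuity is unaffected. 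Thus each block $A_k$ generates a positive holomorphic $C_0$-semigroup on its $L^p$-factor.

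Next I would assemble these four semigroups into the diagonal operator-valued family
\[
e^{tA}=\text{diag}\left(e^{t d_{P_1}\Delta_{N,1}},\, e^{t d_{N_1}\Delta_{N,1}},\, e^{t d_{P_2}\Delta_{N,2}},\, e^{t d_{N_2}\Delta_{N,2}}\right),\quad t\geq 0,
\]
and verify the semigroup axioms on $X$. Strong continuity and the semigroup law hold coordinatewise, so $(e^{tA})_{t\geq 0}\subset\mathcal{L}(X)$ is a $C_0$-semigroup whose generator is precisely $(A,D(A))$. Because the positive cone $X^+$ is exactly the product of the positive cones of the four $L^p$-factors (Definition \ref{Positive cones}), and each diagonal entry preserves its own factor's cone, the family $(e^{tA})_{t\geq 0}$ leaves $X^+$ invariant, which gives positivity. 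For holomorphy I would note that the four Neumann Laplacians share a common sectoriality angle $\theta$, independent of the positive scalings $d_k$, so each block extends holomorphically to the sector $\Sigma_\theta$; the diagonal extension $z\mapsto e^{zA}$ is then holomorphic on $\Sigma_\theta$ with values in $\mathcal{L}(X)$. Equivalently, the sectorial resolvent bound $\|(\lambda-A)^{-1}\|\leq M/|\lambda|$ follows from the blockwise estimates together with the product-norm definition of $\|\cdot\|$ on $X$.

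The only genuine point requiring care is the standing regularity hypothesis on the boundaries $\partial\Omega_1$ and $\partial\Omega_2$ under which Davies' result applies and under which the Neumann trace $\partial_{\vec{\eta}}$ appearing in $D(A)$ is well defined; I would record this as a blanket smoothness assumption on the two zones. Everything else is a routine transfer of scalar properties through the diagonal and the product norm, so I expect no substantial obstacle beyond this bookkeeping.
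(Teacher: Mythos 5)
Your proof is correct and takes essentially the same approach as the paper: both exploit the diagonal structure of $(A,D(A))$, invoke Davies' generation result for the $L^p$-realization of the Neumann Laplacian on each bounded smooth zone, and assemble the semigroup as $e^{tA}=\mathrm{diag}(e^{tA_1},\dots,e^{tA_4})$. You simply supply details the paper leaves implicit (the positive time rescaling by $d_k$, the product structure of the cone $X^+$, the common sectoriality angle, and the boundary smoothness needed for Davies' theorem), which strengthens rather than changes the argument.
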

\begin{proof}
The proof is obvious, since the operator $(A,D(A))$ on $X$ is a diagonal type operator given by \eqref{Operator (A,D(A))}. So, the fact that the $L^p$--realization of Laplacian operator in a bounded smooth domain $\Omega \subset \mathbb{R}^2$ with homegeneous Neumann boundary conditions generates a positive holomorphic $C_0$-semigroup $(e^{t \Delta})_{t\geq 0}$, see \cite{Davies}.\\
Therefore, the semigroup $(e^{t A})_{t\geq 0}$ is given by the following matrix-valued operators 
\begin{equation*}
e^{t A}(t)=diag(e^{t A_1}(t),\cdots ,e^{t A_4}(t) )^*, \quad t\geq 0,
\end{equation*}
where, for each $i=P_1,N_1,P_2,N_2  $, $ (e^{t A_i}(t) )_{t\geq 0}$ is the semigroup generated by the operator $  A_i=d_i \Delta $ in $L^p(\Omega_1)$ for $i=P_1,N_1  $  and $  A_i=d_i \Delta $ in $L^p(\Omega_2)$ for $i=P_2,N_2  $. 
\end{proof}
The following result shows the local Lipschiz continuity charatcter of the nonlinear term $F$ given by \eqref{The function F}.
\begin{lemma}\label{Lemma Local Lipschitz}
	The function $F$ is Lipschitzian in bounded sets i.e., for all $R>0$ there exists $L_R \geq 0$ such that 
	$$ \| F(\varphi)-F(\psi) \| \leq L_R \|\varphi-\psi \|_{\alpha},  \quad \varphi, \psi\in B(0,R), \, t\geq 0. $$
\end{lemma}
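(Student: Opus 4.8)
The plan is to exploit the additive structure of $F$ together with the embedding \eqref{embedding}. I would split each component $F_i$ into four groups of terms: the bounded linear reaction part $a_i\varphi_j-b_i\varphi_i$, the nonlinear imitation part $\pm[f_i(\varphi_k,\varphi_l)\varphi_k\varphi_l]$, the migration (coupling) part $f_{ji}(\cdot,\varphi_i,\varphi_j)$, and the quasilinear advection part $-\nabla\cdot(\varphi_i\,\vec{v}_i(\cdot))$. Since a finite sum of locally Lipschitz maps is again locally Lipschitz, it suffices to treat each group separately and to bound its $L^p$--increment by a constant depending on $R$ times $\|\varphi-\psi\|_\alpha$. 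The structural fact used throughout is that on the ball $B(0,R)\subset X_\alpha$ the embedding \eqref{embedding} yields a uniform bound $\|\varphi_i\|_\infty+\|\nabla\varphi_i\|_\infty\le C_R$ for every $\varphi\in B(0,R)$; this turns all the nonlinearities into products of uniformly bounded functions and gradients, which are then straightforward to estimate.

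The reaction and migration groups are the easiest. The reaction terms are bounded linear operators on $L^p(\Omega_j)$, hence globally Lipschitz. For the migration term, the pointwise part $-m_{ji}\mathbf{p}_{ji}(\cdot)\varphi_i$ is multiplication by a function that is continuous on the compact set $\overline{\Omega}_i$, hence bounded, and so defines a bounded linear operator on $L^p$; the nonlocal part $\varepsilon_i(\cdot)\,m_{ij}\int_{\Omega_i}\mathbf{p}_{ji}(y)\varphi_j(y)\,dy$ factors as a bounded linear functional of $\varphi_j$ (since $\mathbf{p}_{ji}\in L^{p'}(\Omega_i)$ by continuity) times the bounded continuous weight $\varepsilon_i$. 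Both are globally Lipschitz, with constants independent of $R$.

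For the imitation group I would first record that $\xi$ is bounded with $0\le\xi\le 1$ and that $\xi'$ is bounded on $\R$. Because the regularizing constant satisfies $\epsilon>0$, the denominators appearing in $f_i$ stay bounded away from $0$ on the relevant regime, so the pointwise map $(\varphi_k,\varphi_l)\mapsto f_i(\varphi_k,\varphi_l)$ is Lipschitz in its arguments with a constant $C(R,\epsilon)$ coming from the chain rule and the mean value theorem. Combining this with the uniform $C^0$--bound from \eqref{embedding} and the product--difference identity $\varphi_k\varphi_l-\psi_k\psi_l=(\varphi_k-\psi_k)\varphi_l+\psi_k(\varphi_l-\psi_l)$, the full imitation term is seen to be Lipschitz in $L^p$, its increment being controlled by $\|\varphi-\psi\|_\infty\le C\|\varphi-\psi\|_\alpha$.

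The advection group is where I expect the main obstacle, and it is exactly the reason the domain of $F$ is $X_\alpha$ rather than $X$. Writing $\vec{v}_i(u)=v_{i,\max}(1-u)\vec{\nu}_i$ with a fixed smooth bounded direction field $\vec{\nu}_i$, I would expand $\nabla\cdot(\varphi_i\,\vec{v}_i(u))=\vec{v}_i(u)\cdot\nabla\varphi_i+\varphi_i\,\nabla\cdot\vec{v}_i(u)$, so that the increment $F(\varphi)-F(\psi)$ involves differences of products of the densities and of their gradients. Inserting intermediate terms and estimating in $L^p$, each factor is controlled either in $L^\infty$ by the $C^1$--bound from \eqref{embedding} or in $L^p$ by $\|\nabla(\varphi-\psi)\|_p\le\|\varphi-\psi\|_\alpha$; the genuinely delicate estimates are those of the type $\|\varphi_i\nabla\varphi_i-\psi_i\nabla\psi_i\|_p$, which require simultaneous uniform control of a function and of its gradient. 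This is precisely what the fractional norm defining $X_\alpha$ (with $\alpha\in(1/p+1/2,1)$, ensuring the $C^1$ embedding) provides, and assembling the four groups yields the claimed bound with $L_R=C_R$.
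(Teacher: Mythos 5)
Your proposal is correct and follows essentially the same route as the paper's proof: both decompose $F$ into the reaction, imitation, migration, and advection groups and estimate each increment in $L^p$ using the embedding $X_\alpha \hookrightarrow C^1(\overline{\Omega}_1)^{2}\times C^1(\overline{\Omega}_2)^{2}$ to control the densities and their gradients uniformly on $B(0,R)$. If anything, your treatment is slightly sharper in places --- you observe that the migration terms are globally Lipschitz (being affine in their arguments), and you make the product-rule expansion of the advection term explicit where the paper simply asserts the corresponding estimate.
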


\begin{proof}
Let \( F(t, \varphi) \) be the nonlinear function as defined in \eqref{The function F}, and let \( B(0,R) \subset X^\alpha \) denote the centred ball of radius \( R >0 \) in the norm \( \|\cdot\|_\alpha \). We aim to show that there exists a constant \( L_R \geq 0 \) such that
\[
\| F(\varphi) - F(\psi) \| \leq L_R \|\varphi - \psi\|_\alpha, \quad \varphi, \psi \in B(0, R), \quad t \geq 0.
\]
Indeed, the function \( F(t, \varphi) \) is composed of several nonlinear terms: advection, intrinsic transition, imitation, and migration. We estimate the difference \( \| F(\varphi) - F(\psi) \| \) term by term. Note that the following constants \( C_{i,R} \), for $ i=1,\cdots,1 $, depend on the radius \( R \), since \( \varphi, \psi \in B(0,R) \) and on $\Omega_1$ and $\Omega_2$.\\
The advection terms involve \( \nabla \cdot (\varphi_i \vec{v}_{P_i}) \) and \( \nabla \cdot (\varphi_i \vec{v}_{N_i}) \) where \( \vec{v}_{P_i} \) and \( \vec{v}_{N_i} \), for $i=1,2 $, depend on \( \varphi_1 + \varphi_2 \) or \( \varphi_3 + \varphi_4 \) respectively. So by definition, we have:
  \[
  \| \nabla \cdot (\varphi_1 \vec{v}_{P_1}) - \nabla \cdot (\psi_1 \vec{v}_{P_1}) \| \leq C_{1,R} \left(  \|\varphi - \psi\|_\infty +\|\nabla\varphi - \nabla\psi\|_\infty\right).
  \]
Similarly for the other advection terms.\\
 The intrinsic transition terms are linear and take the form \( b_i \varphi_i \) and \( a_i \varphi_i \). These are trivially Lipschitz continuous:
  \[
  \| b_i \varphi_1 - b_i \psi_1 \|_p = b_i \| \varphi_1 - \psi_1 \|_p \leq C_{2} \|\varphi - \psi\|_\infty,
  \]
\[
  \| a_i \varphi_2 - a_i \psi_2 \|_p = a_i \| \varphi_1 - \psi_1 \|_p \leq C_{3} \|\varphi - \psi\|_\infty,
  \] 
The imitation terms involve expressions like \( f_1(\varphi_1, \varphi_2) \varphi_1 \varphi_2 \) and \( f_2(\varphi_3, \varphi_4) \varphi_3 \varphi_4 \). Since \( f_1 \) is smooth and Lipschitz continuous in bounded sets, we have:
  \[
  \| f_1(\varphi_1, \varphi_2) \varphi_1 \varphi_2 - f_1(\psi_1, \psi_2) \psi_1 \psi_2 \|_p \leq C_{4,R} \|\varphi - \psi \|_\infty.
  \]
Similarly, for the term \( f_2(\varphi_3, \varphi_4) \varphi_3 \varphi_4 \).\\
Migration between zones involves terms like \( f_{21}(\varphi_1, \varphi_3) \), \( f_{21}(\varphi_2, \varphi_4) \), \( f_{12}(\varphi_1, \varphi_3 \) and \( f_{12}(\varphi_2, \varphi_4) \). Since, for example, \( f_{21} \) is Lipschitz continuous in bounded sets, we have:
  \[
  \| f_{21}(\cdot,\varphi_1, \varphi_3) - f_{21}(\cdot,\psi_1, \psi_3) \|_p \leq C_{5,R} \|\varphi - \psi \|_\alpha.
  \]
Similarly, for the other migration terms.\\  
By Sobolev embedding \eqref{embedding}, \( X^\alpha \) is continuously embedded into \(C^1(\overline{\Omega}_1)^{2} \times C^1(\overline{\Omega}_2)^{2}\) equipped with its usual norm. This guarantees that the products of \( \varphi_i \), their gradients, and the nonlinear terms are Lipschitz continuous in \( B(0, R) \).

Summing up all the contributions from the advection, diffusion, imitation, and migration terms, we conclude that:
\[
\| F(\varphi) - F(\psi) \| \leq L_R \|\varphi - \psi\|_\alpha,
\]
where \( L_R \geq 0\) depends on the Lipschtiz constants for the terms before and on the embedding constant. This shows that \( F \) is locally Lipschitz continuous in \( B(0, R) \subset X^\alpha \).\\
This completes the proof.
\end{proof}

\subsection{Main results} In this section we give some prelilmanry results about thexistence, uniqueness, regularity and positivity of the soulutions as well as $L^1$-boundedness.
The following result shows local existence and regularity results for our abstract model \eqref{Abstract model}.
\begin{theorem}[Local existence and regularity]\label{Theorem exist regu}
	For each $ U_{0} \in X_\alpha $ there exist $ T_{U_{0}} >0 $ and a unique maximal (classical) solution $U(\cdot):=U(\cdot ,U_{0}) \in C([0,T_{U_0}),X_{\alpha})\cap C^{1}((0,T_{U_0}),X)$ of equation \eqref{Main Model} such that
	\begin{equation} \label{integral formulation of solution}
		U(t)=e^{t A}U_{0}+ \int_{0}^{t} e^{(t-s) A}F(U(s)) ds, \quad t\in [0,T_{U_0}).
	\end{equation} 
	Moreover the solution $U$ satisfies the following property:
	\begin{equation}
		T_{U_0}=+\infty \quad \text{or} \quad  \limsup_{t\rightarrow T_{U_0}^{-} } \| U(t )\| = +\infty . \label{ETFFormula}
	\end{equation} 
\end{theorem}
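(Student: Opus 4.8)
The plan is to treat \eqref{Abstract model} as a standard abstract semilinear parabolic Cauchy problem and to obtain the solution as a fixed point of the variation-of-constants map. Concretely, for a radius $R>0$ adapted to $\|U_0\|_\alpha$ and a small time $T>0$, I would work in the complete metric space $\mathcal{E}_T := \{ U \in C([0,T],X_\alpha) : U(0)=U_0, \ \sup_{t}\|U(t)\|_\alpha \le R \}$ and define
\[
(\Phi U)(t) := e^{tA}U_0 + \int_0^t e^{(t-s)A} F(U(s))\,ds .
\]
A fixed point of $\Phi$ is precisely a mild solution in the sense of \eqref{integral formulation of solution}, and uniqueness on $[0,T]$ is built into the Banach fixed point theorem. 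The two structural inputs are already available: Proposition \ref{Proposition semigroup} furnishes the holomorphic $C_0$-semigroup $(e^{tA})_{t\ge0}$, and Lemma \ref{Lemma Local Lipschitz} gives the local bound $\|F(\varphi)-F(\psi)\| \le L_R\|\varphi-\psi\|_\alpha$ on $B(0,R)$.

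The analytic heart of the argument is the smoothing estimate for the holomorphic semigroup between the intermediate space $X_\alpha$ and the base space $X$: there is $C>0$ with $\|e^{tA}\|_{\mathcal{L}(X,X_\alpha)}\le C\,t^{-\alpha}$ for $0<t\le T$, together with a uniform bound $\|e^{tA}\|_{\mathcal{L}(X_\alpha)}\le M$. Combining these with the Lipschitz estimate yields, for $U,V\in\mathcal{E}_T$,
\[
\|(\Phi U)(t)-(\Phi V)(t)\|_\alpha \le \int_0^t C(t-s)^{-\alpha}L_R\|U(s)-V(s)\|_\alpha\,ds \le \frac{CL_R}{1-\alpha}\,T^{1-\alpha}\sup_{0\le s\le T}\|U(s)-V(s)\|_\alpha .
\]
A parallel computation, using $\|e^{tA}U_0-U_0\|_\alpha\to0$, shows $\Phi$ maps the ball into itself. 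Since $\alpha<1$ the kernel $(t-s)^{-\alpha}$ is integrable and the factor $T^{1-\alpha}$ can be made $<1$, which closes the fixed-point argument. This is exactly where the admissible range $1/p+1/2<\alpha<1$ (and $p>2$) enters: the lower bound forces the embedding \eqref{embedding}, so that $F$, which depends on $\nabla\varphi$ through the advection terms, is well defined and Lipschitz out of $X_\alpha$, while the upper bound $\alpha<1$ preserves integrability of the smoothing singularity.

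Having the mild solution $U\in C([0,T],X_\alpha)$, I would upgrade it to a classical one. The standard route is to show that $s\mapsto F(U(s))$ is locally Hölder continuous on $(0,T]$; since $F$ is Lipschitz from $X_\alpha$ to $X$, it suffices to establish local Hölder continuity of $t\mapsto U(t)$ in $X_\alpha$, which follows from the mild formula and the holomorphic smoothing estimates. With $F(U(\cdot))$ Hölder continuous, the classical regularity theory for analytic semigroups (see \cite{Lunardi,Pazy}) gives $U\in C^1((0,T),X)$, $U(t)\in D(A)$ for $t>0$, and that $U$ solves \eqref{Abstract model}, hence \eqref{Main Model}, in the classical sense.

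Finally, the maximal solution and the blow-up alternative \eqref{ETFFormula} follow from the usual continuation argument: one sets $T_{U_0}$ to be the supremum of existence times, and if $T_{U_0}<\infty$ while $\limsup_{t\to T_{U_0}^-}\|U(t)\|$ remained finite, the local theorem could be restarted near $T_{U_0}$ with a uniform existence step, contradicting maximality. I expect the main obstacle to be precisely the gradient dependence of $F$ through the nonlinear advection: it prevents solving the problem directly in $X=L^p$ and is the whole reason for working in $X_\alpha$. Controlling that dependence — keeping the $t^{-\alpha}$ singularity integrable while still dominating the first-order terms via \eqref{embedding} — is the delicate balance, and it is also the subtle point in phrasing the blow-up alternative in the $X$-norm rather than the $X_\alpha$-norm, which would require a regularizing bound of the form $\|U(t)\|_\alpha\le C(\|U(t)\|)$ along the trajectory to transfer boundedness from $X$ back to $X_\alpha$.
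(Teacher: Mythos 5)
Your proposal is correct and is essentially the paper's own route: the paper proves the theorem by citing \cite[Theorem 2.1.1]{T.Dlotko} and \cite[Theorem 7.1.2]{Lunardi}, whose proofs are precisely the argument you reconstruct --- a contraction in $C([0,T],X_\alpha)$ based on the smoothing estimate $\|e^{tA}\|_{\mathcal{L}(X,X_\alpha)}\le C\,t^{-\alpha}$ together with Lemma \ref{Lemma Local Lipschitz}, followed by the H\"older-continuity upgrade to a classical solution and the standard continuation argument. The one caveat, which you flag yourself but do not resolve, is that continuation naturally yields the blow-up alternative in the $X_\alpha$-norm whereas \eqref{ETFFormula} is stated in the weaker $X$-norm; transferring boundedness from $X$ back to $X_\alpha$ along the trajectory requires the regularizing bound you mention, a point the paper's citation-style proof likewise leaves implicit.
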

\begin{proof}
The result holds directly from \cite[Theorem 2.1.1]{T.Dlotko} and also \cite[Theorem 7.1.2]{Lunardi}.
\end{proof}
%
Now, we present a positivy result of the local classical solutions of the model \eqref{Abstract model}, guaranted by Theorem \ref{Theorem exist regu}, provided that the initial condition is positive in $X_\alpha$. We notice that, our result uses the subtangential condition and invariant closed sets for advection-diffusion-reaction models due to H. Amann \cite{Amann2}. We recall that, Amann's result generalize that of R. H. Martin \cite{Martin} in the case of reaction-diffusion equations (i.e., without nonlinear advection).
\begin{theorem}[Positivity]\label{Positivity}
For each $ U_0 \in X_{\alpha}^{+} $ equation  \eqref{Main Model}  has a unique maximal solution $U(\cdot ,u_{0}) \in C([0,T(U_0 )),X_{\alpha})\cap C^{1}((0,T(U_0 )),X)$  such that $ U(t)\in  X_{\alpha}^{+} $ for all $ t\in [0,T(U_0 ) ) $.
\end{theorem}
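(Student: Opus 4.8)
The plan is to regard existence, uniqueness and regularity as already settled by Theorem \ref{Theorem exist regu}, since the datum $U_0\in X_\alpha^+\subset X_\alpha$; the only genuinely new content is the invariance of the positive cone $X_\alpha^+$ under the local flow. To establish this I would appeal to the invariance principle of Amann \cite{Amann2} for advection-diffusion-reaction systems (the generalization of Martin's criterion \cite{Martin} that accommodates the nonlinear advection): a closed convex set is positively invariant for $U'=AU+F(U)$ provided (i) the linear semigroup $(e^{tA})_{t\geq 0}$ leaves the set invariant and (ii) $F$ satisfies a subtangential (quasi-positivity) condition relative to the set along its boundary. For the cone $X^+$, condition (i) is precisely the positivity of $(e^{tA})_{t\geq 0}$ recorded in Proposition \ref{Proposition semigroup}, so the whole argument reduces to verifying (ii).

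To verify the subtangential condition I would argue componentwise. Fix $\varphi\in X_\alpha^+$ on the boundary of the cone, so that some component $\varphi_i$ attains the value $0$ at a point $x_0\in\overline{\Omega}$; since $\varphi_i\geq 0$ this point is a global minimum of $\varphi_i$. One must then show $F_i(x_0,\varphi(x_0),\nabla\varphi(x_0))\geq 0$. Inspecting \eqref{The function F}, the zeroth-order part of each $F_i$ splits into: a linear gain term ($a_i\varphi_k$ or $b_i\varphi_k$ with $k\neq i$) having nonnegative coefficient acting on a nonnegative component, hence $\geq 0$; intrinsic-loss and imitation terms ($-b_i\varphi_i$, $-a_i\varphi_i$, $\mp f_i\varphi_i\varphi_k$) each carrying a factor $\varphi_i$, which therefore vanish at $x_0$; and the migration term $f_{ji}(\cdot,\varphi_i,\varphi_j)=-m_{ji}\mathbf{p}_{ji}\varphi_i+\varepsilon_i m_{ij}\int \mathbf{p}_{ji}\varphi_j$, whose loss part again carries the factor $\varphi_i$ and so vanishes at $x_0$, while its gain part is an integral of the nonnegative density $\varphi_j$ against nonnegative probabilities and is thus $\geq 0$. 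Hence the reaction part alone is already nonnegative at $x_0$.

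The main obstacle, and the precise reason Amann's theorem rather than Martin's is needed, is the nonlinear advection $-\nabla\cdot(\varphi_i\vec{v}_i)=-\vec{v}_i\cdot\nabla\varphi_i-\varphi_i\,\nabla\cdot\vec{v}_i$. I would dispose of it using that $x_0$ is a minimum point of $\varphi_i$: at an interior minimum $\nabla\varphi_i(x_0)=0$, while at a boundary minimum the tangential derivatives vanish and the Neumann condition $\partial_{\vec{\eta}}\varphi_i=0$ annihilates the normal derivative, so again $\nabla\varphi_i(x_0)=0$; combined with $\varphi_i(x_0)=0$, both summands of the advective term vanish at $x_0$. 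This is exactly where the embedding \eqref{embedding} into $C^1(\overline{\Omega}_1)^2\times C^1(\overline{\Omega}_2)^2$ is used, since it guarantees that $\nabla\varphi_i$ is well defined and continuous so that the pointwise evaluation is legitimate. Consequently the advection contributes nothing and $F_i(x_0,\varphi(x_0),\nabla\varphi(x_0))\geq 0$ for every boundary component, which is the subtangential condition. Invoking Amann's invariance theorem then yields $U(t)\in X_\alpha^+$ for all $t\in[0,T(U_0))$, while existence, uniqueness and the stated regularity are inherited directly from Theorem \ref{Theorem exist regu}.
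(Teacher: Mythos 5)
Your proposal follows essentially the same route as the paper: invoke Amann's invariant-set theorem for advection-diffusion-reaction systems, reduce the problem to (i) positivity of $(e^{tA})_{t\geq 0}$ (Proposition \ref{Proposition semigroup}) and (ii) the subtangential condition for $F$ on the positive cone, and inherit existence, uniqueness and regularity from Theorem \ref{Theorem exist regu}. Your componentwise verification of (ii) --- gain terms nonnegative, loss terms and the advection term vanishing at points where $\varphi_i=0$ and $\nabla\varphi_i=0$ --- is just a more detailed (and in fact more accurate) writing-out of the pointwise quasi-positivity that the paper asserts tersely via the claim $F_i(x,0,z)=0$ and its reference to \cite[Lemma 7.4]{Amann3}.
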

\begin{proof}
	To prove this result, we use the invariant closed set result for semilinear advection-diffusion-reaction equations due to H. Amann, see \cite{Amann3}. So, it  suffices to prove that the following invariant conditions hold:
	\begin{itemize}
		\item[(i)] $ e^{tA} X_\alpha^+ \subset X_\alpha^+  $ for all $t\geq 0$.
		\item[(ii)] For all $\varphi \in X^+_\alpha $, $\lim_{h\rightarrow 0} h^{-1}d(\varphi + h F(\varphi);X^{+})=0$.
	\end{itemize}
	The statement (i) follows directly from Proposition $\ref{Proposition semigroup}$ and the fact that $ e^{tA} X_\alpha \subset X_\alpha $ for all $t\geq 0$. To show that the subtangential condition (ii) holds, we argue as follows:\\
	Let us consider the (pointwize) subtangential conditions given by:\\
	For all $\varphi:=(\varphi_1,\varphi_2,\varphi_3,\varphi_4)^{*} \in X^+_\alpha $, 
	\begin{equation} \label{subtang cond 11}
		\lim_{h\rightarrow 0} h^{-1}d(\varphi(x) + h F_i(x,\varphi_i (x),\nabla \varphi (x));[0,+\infty))=0, \quad \text{for all } x \in \Omega_1, \, i=1,2,
	\end{equation}
	and 
	\begin{equation}\label{subtang cond 12}
		\lim_{h\rightarrow 0} h^{-1}d(\varphi(x) + h F_i(x,\varphi_i (x),\nabla \varphi(x));[0,+\infty))=0, \quad \text{for all } x \in \Omega_2, \, i=3,4.
	\end{equation}
	Hence, the subtangential conditions \eqref{subtang cond 11} and \eqref{subtang cond 12} hold, since for all $x\in \Omega_1$ and $z \in \mathbb{R}$,  $ F_i(x,0,z)=0 $ for $  i=1,2 $ and,  for all $x\in \Omega_2$ and $z \in \mathbb{R}$, $ F_i(x,0,z)=0 $ for  $  i=3,4$, we refer to \cite[Section 1]{Amann3} for more details. \\
	Therefore, the subtangential condition (ii) holds using the same proof as in \cite[Lemma 7.4, Section 7]{Amann3}. Hence, the result yields from \cite[Theorem 1]{Amann3}, see also \cite[Section 3.3]{K_Khalil} for another similar proof.
\end{proof}

Next, we give the $L^1$-boundedness of the solutions guarantees the conservation of mass of in our model.

\begin{proposition}[$L^1$-boundedness] Let the map $V:[0,T(U_0 ))\longrightarrow [0,+\infty)$ defined by 
	
	$$ V(t)=\int_{\Omega_1} \left[ u_{P_1}(t,x)+u_{N_1}(t,x) \right] dx + \int_{\Omega_2} \left[ u_{P_2}(t,y)+u_{N_2}(t,y) \right] dy.$$
	Then, we have
	\[
	V(t) =1 \quad \text{ for all } t\in  [0,T(u_0 ) ).
	\]
\end{proposition}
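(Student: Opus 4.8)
The plan is to show that $V$ is constant by differentiating it in time, checking that every contribution to $V'(t)$ either vanishes or cancels, and then fixing the constant from the initial normalization $V(0)=1$. By Theorem \ref{Theorem exist regu} the solution is classical, $U\in C^1((0,T(U_0)),X)$ with $U(t)\in X_\alpha\hookrightarrow C^1(\overline{\Omega}_1)^2\times C^1(\overline{\Omega}_2)^2$, so I may differentiate under the integral sign and write
\[
V'(t)=\int_{\Omega_1}\!\big(\partial_t u_{P_1}+\partial_t u_{N_1}\big)\,dx+\int_{\Omega_2}\!\big(\partial_t u_{P_2}+\partial_t u_{N_2}\big)\,dy,
\]
and then substitute the four equations of \eqref{Main Model}.

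First I would dispose of the terms that are internal to a single zone. Summing the stress and non-stress equations in $\Omega_1$, the intrinsic transition contributions $a_1u_{N_1}-b_1u_{P_1}$ and $-a_1u_{N_1}+b_1u_{P_1}$ cancel exactly, and likewise the imitation contributions $\pm f_1(u_{P_1},u_{N_1})u_{P_1}u_{N_1}$ cancel; the same happens in $\Omega_2$. For the diffusion terms, the divergence theorem together with the homogeneous Neumann condition $\partial_{\vec\eta}u_i=0$ gives $\int_{\Omega_1}d_{P_1}\Delta u_{P_1}\,dx=d_{P_1}\int_{\partial\Omega_1}\partial_{\vec\eta}u_{P_1}\,dS=0$, and similarly for the other three diffusion terms. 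For the advection terms, the divergence theorem produces boundary integrals $\int_{\partial\Omega_j}(\vec v_i\,u_i)\cdot\vec\eta\,dS$; these vanish under the no-flux hypothesis that the desired directions $\vec\nu_i$ are tangent to $\partial\Omega_j$ (equivalently $\vec v_i\cdot\vec\eta=0$ on the boundary), which is precisely the modeling assumption that pedestrians leave a zone only through the internal migration coupling and not across $\partial\Omega_j$.

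What remains is the migration contribution
\[
\int_{\Omega_1}\!\big(f_{21}(u_{P_1},u_{P_2})+f_{21}(u_{N_1},u_{N_2})\big)\,dx+\int_{\Omega_2}\!\big(f_{12}(u_{P_2},u_{P_1})+f_{12}(u_{N_2},u_{N_1})\big)\,dy,
\]
and showing this equals zero is the heart of the argument. I would treat it species by species (the stress and non-stress computations are identical). Using the definition of the coupling terms, each outflow term $-m\,\mathbf p(\cdot)\,u$ integrated over its own zone produces a single integral, while each inflow term $\varepsilon(\cdot)\,m\int\mathbf p\,u$ produces, after applying Fubini's theorem to interchange the order of integration, the factor $\int_{\Omega_i}\varepsilon_i$ times exactly the single integral coming from the other zone. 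The contributions then cancel in pairs provided the reception normalizations $\int_{\Omega_1}\varepsilon_1(x)\,dx=\int_{\Omega_2}\varepsilon_2(y)\,dy=1$ hold together with the correct pairing of the migration coefficients $m_{ij}$, which encodes conservation of individuals during transit (everyone leaving one zone is received somewhere in the other).

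The main obstacle is precisely this migration cancellation: it is nonlocal, so it forces the use of Fubini's theorem, and it closes only under the reception-probability normalization $\int_{\Omega_i}\varepsilon_i=1$ and the matching of the coefficients $m_{ij}$; every other term is handled by routine divergence-theorem or pointwise-cancellation arguments. Collecting everything yields $V'(t)=0$ on $(0,T(U_0))$, and continuity of $V$ up to $t=0$ together with the initial normalization $\int_{\Omega_1}(u_{P_1,0}+u_{N_1,0})\,dx+\int_{\Omega_2}(u_{P_2,0}+u_{N_2,0})\,dy=1$ gives $V(t)\equiv 1$ on $[0,T(U_0))$, as claimed.
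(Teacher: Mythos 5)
Your proof is correct and follows essentially the same route as the paper's: differentiate $V$ under the integral, cancel the transition and imitation terms pointwise, eliminate diffusion and advection via the divergence theorem with the boundary conditions, show the nonlocal migration contributions cancel, and conclude from $V(0)=1$. You are in fact more explicit than the paper on two hypotheses it leaves implicit, namely that the advective boundary flux vanishes only under the tangency/no-flux condition $\vec{v}_i \cdot \vec{\eta} = 0$ on $\partial\Omega_j$ (the Neumann condition on the densities alone does not give this), and that the migration cancellation requires the normalization $\int_{\Omega_i} \varepsilon_i(x)\,dx = 1$ of the reception probabilities together with the correct pairing of the rates $m_{ij}$ --- both points you correctly identify as the conditions under which the paper's phrase ``by definition of $f_{12}$ and $f_{21}$'' actually closes the argument.
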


\begin{proof}
	Let \( U_{0} \in X_{\alpha}^{+} \) and let \(U(t,U_0)=(u_{P_1}(t,\cdot), u_{N_1}(t,\cdot), u_{P_2}(t,\cdot), u_{N_2}(t,\cdot))^{*}\) for all \( t\in [0,T(U_0)) \) be the corresponding maximal solution. It is clear, from Theorem \ref{Theorem exist regu}, for all \( t\in [0,T(U_0)) \), that \(U(t,U_0) \in X_{\alpha}\hookrightarrow C^{1}(\overline{\Omega_1})^2\times C^{1}(\overline{\Omega_2})^2\). This means that, for all $t\in t\in [0,T(U_0))$, \(u_i(t,\cdot)\) for \( i=P_1,N_1 \), is bounded with respect to \(\Omega_1\) and, for all $t\in [0,T(U_0))$, $u_i(t,\cdot )$ , for \( i=P_2,N_2 \), is bounded with respect to \(\Omega_2\) and then are bounded in \(L^1(\Omega_1)\) and \(L^1(\Omega_2)\) norm respectively. Hence, we consider the following mappings:
	
	\[
	t\in [0,T(u_0)) \longmapsto V_1(t):= \int_{\Omega_1} \left[ u_{P_1}(t,x)+u_{N_1}(t,x) \right] dx \in \mathbb{R},
	\]
	
	\[
	t\in [0,T(u_0)) \longmapsto V_2(t):= \int_{\Omega_2} \left[ u_{P_2}(t,y)+u_{N_2}(t,y) \right] dy \in \mathbb{R},
	\]
	and
	\[
	t\in [0,T(u_0)) \longmapsto V(t):= V_1(t) + V_2(t) \in \mathbb{R},
	\]
	By assumption on the initial conditions, we have 
	\[
	V(0)=\int_{\Omega_1} u_{P_1} (x) dx + \int_{\Omega_2} u_{N_2} (y) dy = 1.
	\]
	
	Additionally, the positivity result in Proposition \ref{Positivity} gives 
	\[
	V(t) \geq 0  \quad \text{ for all } t\in [0,T(U_0 ) ).
	\]
	
	Thus, the mapping $V$ is well-defined. Moreover, the mappings \( V_1 \) and \( V_2 \) are well-defined and continuously differentiable on \([0,T(U_0)) \). Hence, we show that 
	\[
	\dfrac{d}{dt}V(t) = \dfrac{d}{dt}V_1(t) + \dfrac{d}{dt}V_2(t) = 0, \quad t \in [0,T(U_0 ) ).
	\]
	
	Indeed, using the Green-Ostrogradski formula, we obtain that 
	\begin{align*}
		\dfrac{d}{dt}V_1(t) &= \int_{\Omega_1} \partial_t \left[ u_{P_1} + u_{N_1} \right] dx \\
		&= d_{P_1}\int_{\Omega_1} \Delta u_{P_1} dx - \int_{\Omega_1} \nabla (u_{P_1}\vec{v}_{P_{1}}(u_1)) dx + d_{N_1}\int_{\Omega_1} \Delta u_{N_1} dx - \int_{\Omega_1} \nabla (u_{N_1}\vec{v}_{N_{1}}(u_1)) dx \\
		&\quad + \int_{\Omega_1} f_{21}(u_{P_1}, u_{P_2}) dx + \int_{\Omega_1} f_{21}(u_{N_1}, u_{N_2}) dx \\
		&= \int_{\Omega_1} f_{21}(u_{P_1}, u_{P_2}) dx + \int_{\Omega_1} f_{21}(u_{N_1}, u_{N_2}) dx.
	\end{align*}
	
	Similarly, we have 
	\begin{align*}
		\dfrac{d}{dt}V_2(t) &= \int_{\Omega_2} \partial_t \left[ u_{P_2} + u_{N_2} \right] dx \\
		&= d_{P_2}\int_{\Omega_2} \Delta u_{P_2} dx - \int_{\Omega_2} \nabla (u_{P_2}\vec{v}_{P_{2}}(u_2)) dx + d_{N_2}\int_{\Omega_2} \Delta u_{N_2} dx - \int_{\Omega_2} \nabla (u_{N_2}\vec{v}_{N_{2}}(u_2)) dx \\
		&\quad + \int_{\Omega_2} f_{21}(u_{P_2}, u_{P_1}) dx + \int_{\Omega_2} f_{21}(u_{N_2}, u_{N_1}) dx \\
		&= \int_{\Omega_2} f_{21}(u_{P_2}, u_{P_1}) dx + \int_{\Omega_2} f_{21}(u_{N_2}, u_{N_1}) dx.
	\end{align*}
	
	Thus, by definition of $f_{12}$ and $f_{21}$, we obtain that,
	\[
	\dfrac{d}{dt}V(t) = \dfrac{d}{dt}V_1(t) + \dfrac{d}{dt}V_2(t) = 0.
	\]
	
	Therefore
	
	\[
	V(t) = V(0)=1 \quad \text{ for all } t\in  [0,T(u_0 ) ).
	\]
	
\end{proof}
\section{Numerical simulations}\label{sec:4}
In this section, we aim to illustrate our model \eqref{Main Model} numerically. Specifically, in our simulations, we focus on the unidirectional migration of stress from zone \(\Omega_1\) to zone \(\Omega_2\). Populations leave \(\Omega_1\) from a departure area, modeled by the function \(\mathbf{p}_{21}\), and are received in a reception area within \(\Omega_2\), modeled by the function \(\varepsilon_2\). The local propagation in each zone considers transition, imitation, diffusion, and advection in \(\Omega_1\).

First, we visualize how stress propagates from \(\Omega_1\) to \(\Omega_2\) and its impact on the stress level of the population in \(\Omega_2\). Second, we provide a local control strategy aimed at reducing the high stress level in \(\Omega_2\) caused by this migration. We consider two scenarios: initially, control measures are applied locally at the departure area in \(\Omega_1\) to measure the impact of this strategy on stress levels in \(\Omega_2\). Subsequently, a local control is implemented at the arrival area in \(\Omega_2\) to reduce stress levels in \(\Omega_2\) caused by migration.

Note that our numerical simulations use parameters that reflect a low-risk culture of the population, where pedestrians tend to be more stressed. Please refer to the parameters outlined in Table \ref{tab:parameters-systeme-ACP} for details on the specific values used in our simulations. The numerical simulations were carried out using FreeFem++, and the visualizations were done using ParaView and R.

\subsection{Evolution of stress by unidirection migration from $\Omega_1$ to $\Omega_2$ (Wc)}\label{sec:4.1}
Imagine a scenario where a disaster has occurred in \(\Omega_1\), causing a high level of stress. This stress spreads through transition, imitation, and diffusion phenomena as pedestrians in \(\Omega_1\) progressively try to escape to the departure area \(\mathbf{p}_{21}\) by advection. Consequently, \(\Omega_2\) becomes the reception domain due to immigration phenomena, where the initial stress level is lower compared to \(\Omega_1\). As people move into \(\Omega_2\), stress begins to propagate in this zone through similar transition, imitation, and diffusion processes for stressed people coming from \(\Omega_1\) to the arrival zone \(\varepsilon_2\), highlighting the spread of stress across interconnected zones during a disaster. In the following, we present the explicit parameters used in our numerical simulations:\\
\hfill \textbf{The departure and the arrival areas.} The functions \(\mathbf{p}_{21}\) and \(\varepsilon_2\) share the same definition, (are of Guassian typa functions) with the only difference being that they are defined over different domains. Each models a circular area centered at a given point and bounded by a radius.

We define the function \(\mathbf{p}_{21}\) on the domain \(\Omega_1\) with values in the interval \([0, 1]\) as follows:
\[
\mathbf{p}_{21}(x) = \exp\left(-\frac{|x - a_1|^2}{r_1^2}\right), \quad x \in \Omega_1,
\]
where \(a_1 \in \Omega_1\) is the center of the departure area and \(r_1\) is the radius defining this area around \(a_1\). \(|x - a_1|\) represents the Euclidean norm of \(x - a_1\). In simple terms, the departure area is a circle centered at \(a_1\) with radius \(r_1\). The function \(\mathbf{p}_{21}\) models the probability of departures in this area: the closer to the center, the higher the probability; the farther from the center, the lower the probability.

The function \(\varepsilon_2\) on the domain \(\Omega_2\) with values in the interval \([0, 1]\) is defined as:
\[
\varepsilon_2(y) = \exp\left(-\frac{|y - a_2|^2}{r_2^2}\right), \quad y \in \Omega_2,
\]
where \(a_2 \in \Omega_2\) is the center of the reception area on \(\Omega_2\) and \(r_2\) is the radius defining this area around \(a_2\). However, we can choose any other function, provided it is continuous and has the same properties as a probability. In Figure \ref{fig:zone-dep-arr}, we can observe an example of $1$--D  (Figure \ref{fig:zone-dep-arr}-(A)) and $2$--D  (Figure \ref{fig:zone-dep-arr}-(B)) for an example of functions \(\mathbf{p}_{21}\) and \(\varepsilon_2\) that are of Gaussian type.
\begin{figure}[htbp]
	\centering
	\begin{subfigure}[b]{0.48\textwidth}
		\centering
		\begin{tikzpicture}
			\begin{axis}[
				width=0.75\textwidth,
				height=0.75\textwidth,
				xlabel={$x$},
				ylabel={$f(x)$},
				domain=-10:10,
				samples=100,
				xmin=-10, xmax=10,
				ymin=0, ymax=1.4,
				ytick={0,0.2,0.4,0.6,0.8,1,1.2},
				]
				\addplot[
				thick,
				smooth,
				color=red,
				]
				{exp(-((x - 0.5)^2)/4^2)};
			\end{axis}
		\end{tikzpicture}
		\caption{Representative curve of the function 
		$f(x)=\exp\bigg(-\dfrac{|x - 0.5|^2}{4^2}\bigg)$}
	\end{subfigure}
	\hfill 
	\begin{subfigure}[b]{0.48\textwidth}
		\centering
		\begin{tikzpicture}
			\begin{axis}[
				width=0.75\textwidth,
				height=0.75\textwidth,
				view={0}{90},
				xlabel={$x_1$},
				ylabel={$x_2$},
				domain=-10:10,
				samples=50,
				colormap/hot,
				colorbar,
				xmin=-10, xmax=10,
				ymin=-10, ymax=10,
				]
				\addplot3[
				surf,
				shader=interp,
				]
				{exp(-((x - 0.5)^2 + (y - 0.5)^2)/4^2)};
			\end{axis}
		\end{tikzpicture}
		\caption{Representation of the function $f(x_{1},x_{2})=\exp\left(-\frac{(x_{1} - 0.5)^2 + (x_{2} - 0.5)^2}{4^2}\right).$ }
	\end{subfigure}
	\caption{Example of a function modeling departure and reception probabilities. The redder the zone, the higher the probability of leaving that area, and the bluer the zone, the lower the probability of leaving that location.}
	\label{fig:zone-dep-arr}
\end{figure}
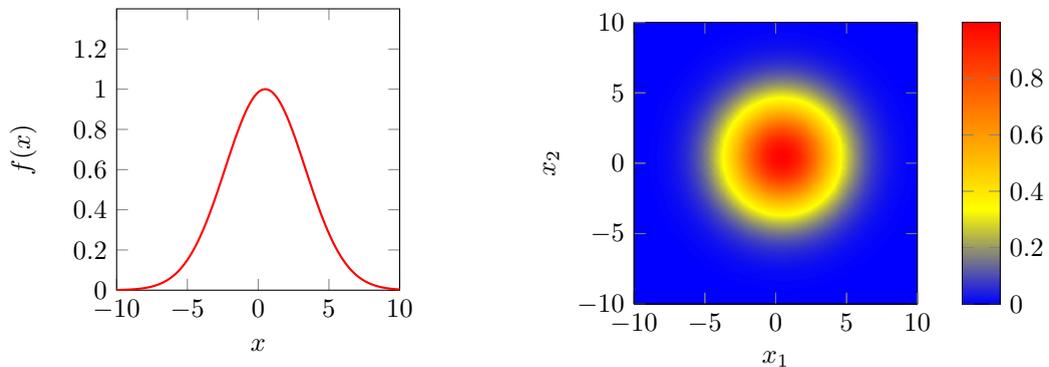

\noindent \textbf{The desired escape direction of the movement.} The desired direction of the movement of the population at each position \((x_1, x_2)^*\) in zone \(\Omega_1\) toward the departure area, a disc centered at \((a, b)^*\), is given by the vectors \(\vec{\nu}(x_1, x_2)\). This can be expressed as follows:
\[
\vec{\nu}(x_1, x_2) = 
\begin{cases} 
	\left( \frac{a - x_1}{\sqrt{(a - x_1)^2 + (b - x_2)^2}}, \frac{b - x_2}{\sqrt{(a - x_1)^2 + (b - x_2)^2}} \right) & \text{if} \ (x_1, x_2) \in \Omega_1, \\
	0 & \text{if} \ (x_1, x_2) \in \partial \Omega_1.
\end{cases}
\]
In \(\Omega_2\), the population only diffuses randomly in all directions from the arrival area. There is no advection, since the migration is only unidirectional. See Figure \ref{fig:zones} for more details.
\begin{figure}[h!]
	\centering
	\includegraphics[width=0.75\linewidth]{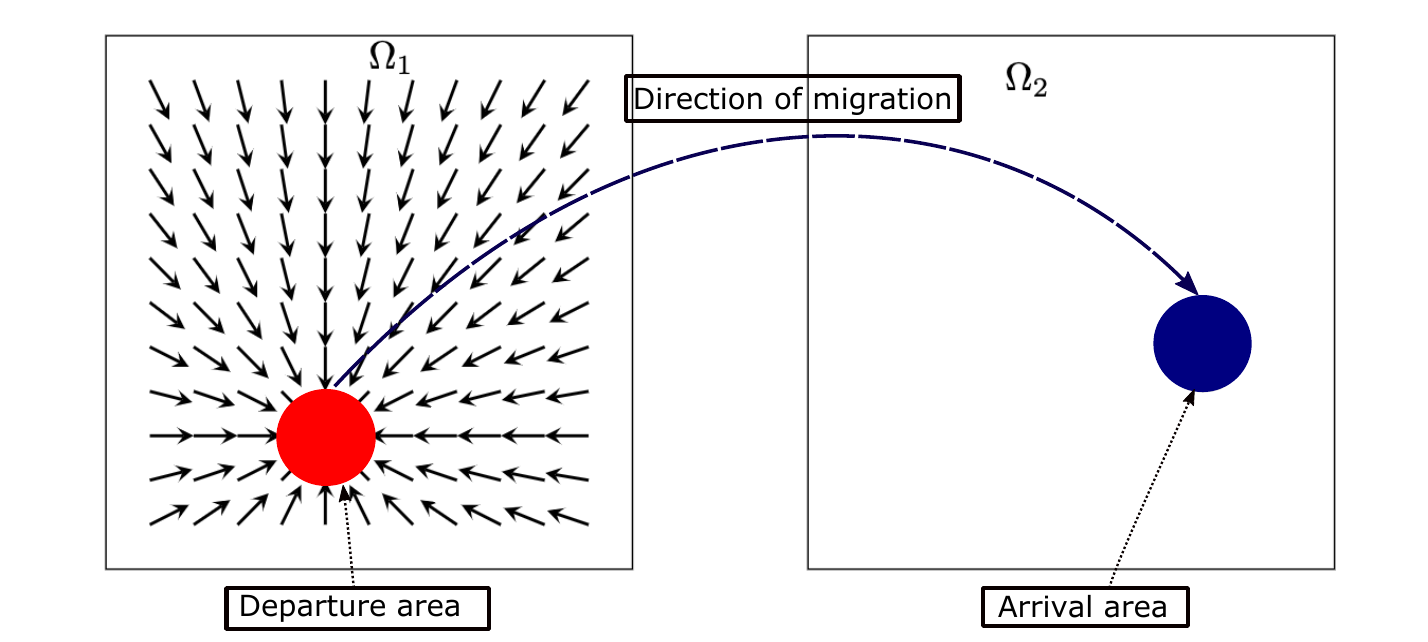}
	\caption{\textbf{Network map.} the square on the left models \(\Omega_1\) and the one on the right models \(\Omega_2\). The red circle in \(\Omega_1\) represents the departure area modeled by the function \(\mathbf{p}_{21}\), and the vector field indicates the desired direction of the movement \(\vec{\nu}\) of the population in \(\Omega_1\) towards the escape area (departure area) centered at \((a, b)\). The blue circle in \(\Omega_2\) represents the reception area for migrants from domain \(\Omega_1\). The red arrow indicates the direction of migration from \(\Omega_1\) to \(\Omega_2\).}
	\label{fig:zones}
\end{figure}

\noindent \textbf{Initial conditions.} The initial conditions are given as follows:\\
In \(\Omega_1\), we assume that at \(t=0\), the initial population is entirely stressed, with the non-stressed population being zero. Furthermore, the initial distribution of the stressed population is divided into three separate clusters.\\
In \(\Omega_2\), we assume that at \(t=0\), the initial population is entirely non-stressed, with the stressed population being zero. Additionally, the initial distribution of the non-stressed population is also divided into three separate clusters.
For more details, we refer to Figure \ref{Initial_Conditions}.
\begin{figure}[htbp]
\begin{center}
\begin{minipage}[c]{.4\linewidth}
\begin{center}
"Initial stressed population"\\
 \hspace{-1cm }$\Omega_1 $  \quad \quad \quad \quad $\Omega_2 $
\end{center}
\includegraphics[width=4cm,height=2cm]{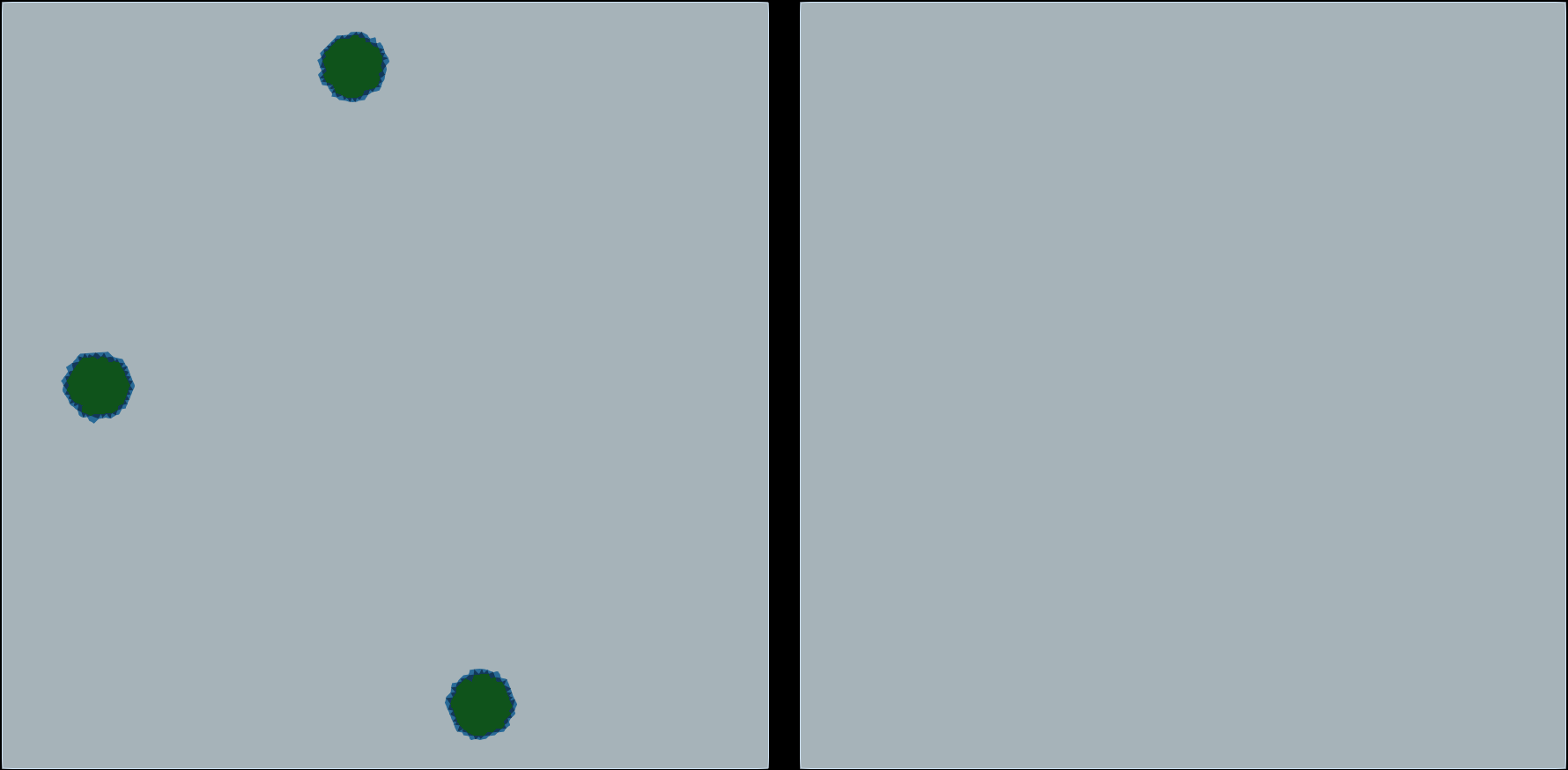}
\end{minipage}
\begin{minipage}[c]{.4\linewidth}
\begin{center}
"Initial non-stressed population"\\
 \hspace{-1cm }$\Omega_1 $  \quad \quad \quad \quad $\Omega_2 $
\end{center}

\includegraphics[width=4cm,height=2cm]{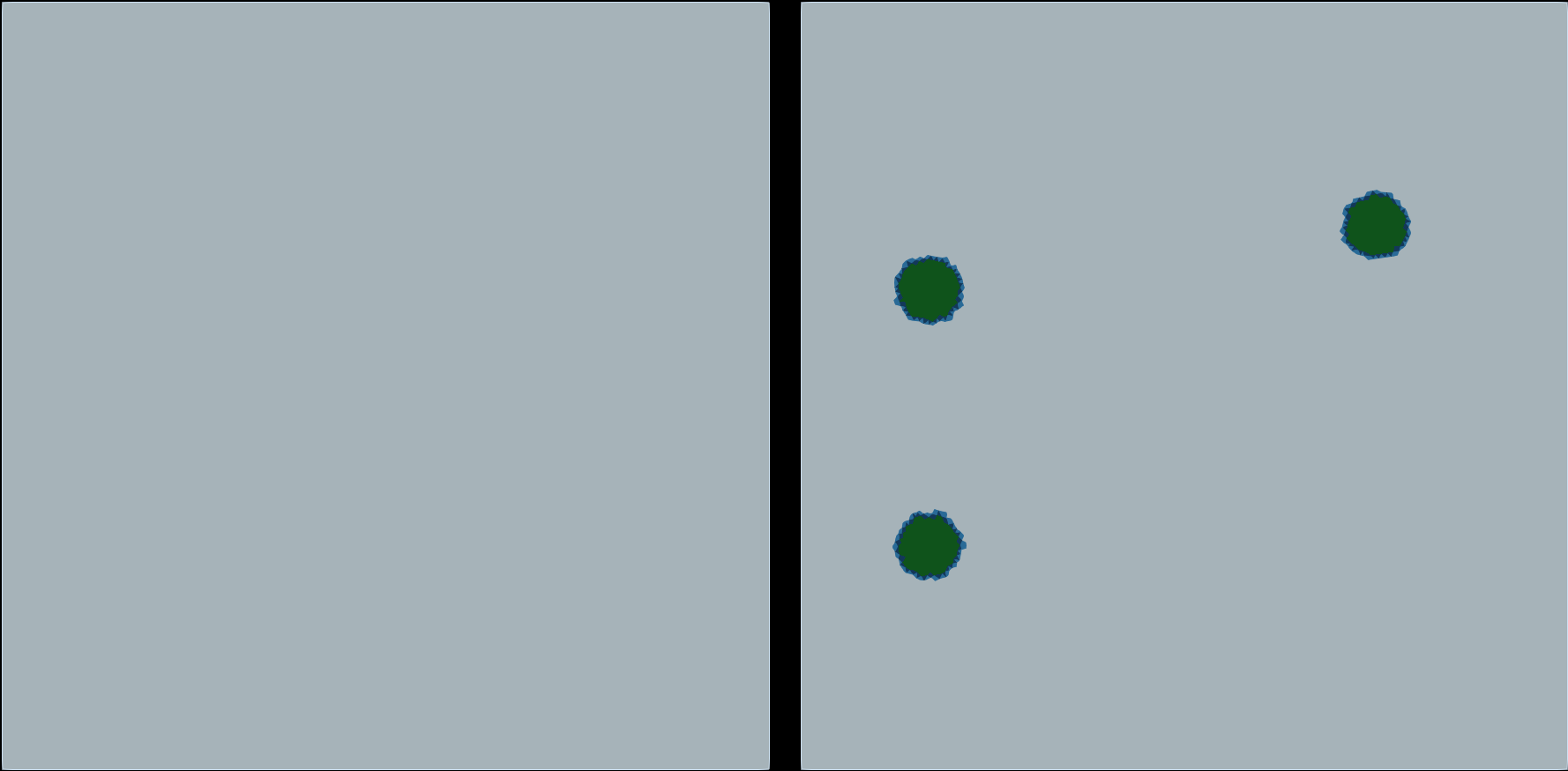}
\end{minipage}

\begin{center}
\begin{minipage}[c]{.5\linewidth}
\includegraphics[width=5cm,height=1.5cm]{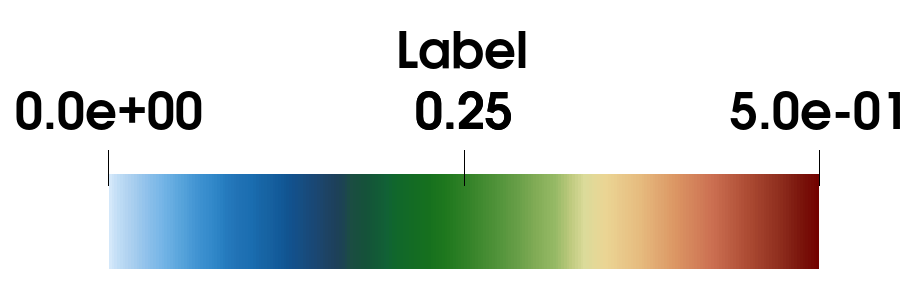}
\end{minipage}
\end{center}
   \caption{\textbf{Initial conditions.} The two squares on the left depict the initial stressed population density in zones $\Omega_1$ and $\Omega_2$, respectively, while the two squares on the right show the initial non-stressed population density in the same zones. In $\Omega_1$, the entire initial population is assumed to be stressed and is divided into three distinct clusters. Conversely, in $\Omega_2$, the entire population is non-stressed, also distributed into three separate clusters. This choice of initial conditions is justified by the fact that the disaster occurs only in $\Omega_1$.
}
\label{Initial_Conditions}
\end{center}
\end{figure}

\subsection{ A local control strategy to reduce stress levels in the network}
In this section, we propose implementing controls in specific areas to reduce stress levels in the network, with particular focus on \(\Omega_2\). Specifically, a first scenario with a control will be applied in the departure zone of \(\Omega_1\) and another scenario with a control in the arrival zone of \(\Omega_2\). The parameters considered in this section are the same as before, see Table  \ref{tab:parameters-systeme-ACP}. The control functions are defined as:

\begin{equation}
	\mathbf{u}_1(t,x) = K_1(t) \ \mathbf{p}_{21}(x) \ u_{P_1}, \quad x\in \Omega_1,
\end{equation}
in the departure area, and
\begin{equation}
	\mathbf{u}_2(t,x) = K_2(t) \ \varepsilon_{2}(y) m_{21} \int_{\Omega_1} \mathbf{p}_{21}(y)  \ u_{P_2}(t,y)dy, \quad y\in \Omega_2.
\end{equation}
in the arrival area, where \(K_1, K_2 : [0,+\infty) \longrightarrow [0,1]\) defined by
\begin{equation}\label{gamma} 
 K_i(t):=K_i \ \zeta(t,T_0,T_1), \quad i=1,2; \, 0\leq T_0 <T_1.
\end{equation}
where \(0 <K_1, K_2 \leq 1\) are the control rates representing the maximal proportions of stressed pedestrians who become unstressed by the control strategy. When \(K_1 = 0\) and \(K_2 = 0\), we revert to the main model. The function $ \zeta(t,T_0,T_1) $ is given by
\begin{equation}\label{z}
 \zeta(t,T_0,T_1) :=\left\{
 \begin{aligned}
 &0 & \quad \text{ if }  0\leq t<T_0 , \\
 & \dfrac{1}{2}-\dfrac{1}{2}\cos\left(\frac{t-T_0}{T_1-T_0}\pi\right) & \quad \text{ if } T_0\leq  t \leq T_1,   \\
 & 1  & \quad \text{ elswhere},    \\
  \end{aligned}
    \right.
\end{equation}
where \( T_0 \geq 0 \) is the initial time for implementing the local control strategy and \( 0 <T_1 \) is the maximum duration (end time) during which the control strategy is at its peak, i.e., \( K_i(t) = K_i \) for \( t \geq T_1 \) and \( i = 1, 2 \). The time interval \([T_0, T_1]\) represents the period during which the control strategy is active, aiming to reduce stress levels in this area to the maximum extent.
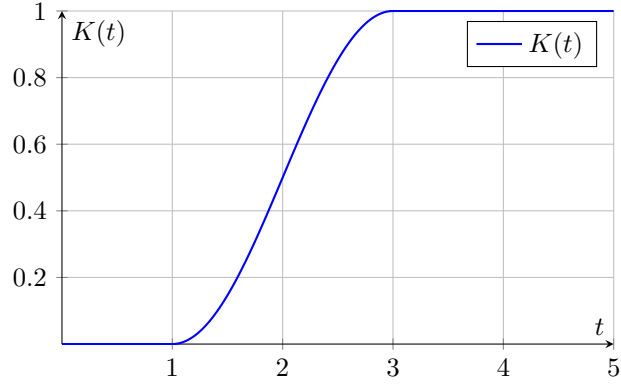
\begin{figure}[h]
		\centering
		\begin{tikzpicture}
			\begin{axis}[
				xlabel=$t$,
				ylabel=$K(t)$,
				legend pos=north east,
				grid=both,
				axis lines=middle,
				width=0.6\textwidth,
				height=6cm,
				samples=100,
				domain=0:5
				]
				\addplot[blue, thick, domain=0:1] {0};
				\addplot[blue, thick, domain=1:3] {0.5 - 0.5 * cos(deg(pi*(x-1)/2))};
				 \addplot[blue, thick, domain=3:5] {1};
				\legend{$K(t)$}
			\end{axis}
		\end{tikzpicture}
		\caption{Example of representative graph of the control function $K(t)=K\ \zeta(t,T_0,T_1)$ with $K=1 $, $T_0=1$ and $T=3$.}
		\label{fig: xi}
	\end{figure}
Applying these control functions to \(\Omega_1\) and \(\Omega_2\) respectively, the resulting controlled model is as follows:
 
\begin{equation}\label{control Model}
	\begin{aligned}
		\begin{cases}
			\begin{cases}
				\dfrac{\partial u_{P_1}}{\partial t} =& d_{P_1}\Delta u_{P_1} -\nabla \cdot (\vec{v}_{P_1}(u_{P_1})  u_{P_1}) + a_1\  u_{N_1} - b_1 u_{P_1} + f_1(u_{P_1}, u_{N_1})  u_{N_1}  u_{P_1} \\ &+ f_{21}(x,u_{P_1}, u_{P_2})-\mathbf{u}_1 (t,x),\quad \quad \quad \quad   \quad\quad \quad \quad \quad t>0, x \in \Omega_1, \\[4mm]
				\dfrac{\partial u_{N_1}}{\partial t} =& d_{N_1}\Delta u_{N_1} - \nabla \cdot (\vec{v}_{N_1}  u_{N_1}) - a_1  \ u_{N_1} + b_1 u_{P_1} - f_1(u_{P_1}, u_{N_1}) u_{N_1} u_{P_1} \\&+ f_{21}(x,u_{N_1}, u_{N_2})+\mathbf{u}_1(t,x),\quad \quad \quad \quad \quad \quad \quad \quad \quad t>0, x \in \Omega_1, \\[4mm]
				\partial_{\vec{\eta}} u_{P_1} =& \partial_{\vec{\eta}} u_{N_1} = 0, \quad \quad t>0, x \in \partial \Omega_1, \\\\
			\end{cases}\\[4mm]
			\begin{cases}
				\dfrac{\partial u_{P_2}}{\partial t} =& d_{P_2}\Delta u_{P_2} -\nabla \cdot (\vec{v}_{P_2}(u_{P_2}) u_{P_2}) + a_2 u_{N_2} - b_2 u_{P_2} + f_2(u_{P_2}, u_{N_2}) u_{N_2} u_{P_2} \\ &+ f_{12}(y,u_{P_2}, u_{P_1}) - \mathbf{u}_2(t,y),\quad \quad \quad \quad \quad \quad \quad t>0, x \in \Omega_2, \\[4mm]
				\dfrac{\partial u_{N_2}}{\partial t} =& d_{N_2}\Delta u_{N_2} - \nabla \cdot (\vec{v}_{N_2}  u_{N_2}) - a_2 u_{N_2} + b_2 u_{P_2} - f_2(u_{P_2}, u_{N_2}) u_{N_2} u_{P_2} \\ &+ f_{12}(y,u_{N_2}, u_{N_1})+\mathbf{u}_2(t,y), \quad \quad \quad \quad \quad \quad \quad t>0, y \in \Omega_2, \\[4mm]
				\partial_{\vec{\eta}} u_{P_2} =& \partial_{\vec{\eta}} u_{N_2} = 0, \quad \quad t>0, y \in \partial \Omega_2. \\\\
			\end{cases}
		\end{cases}
	\end{aligned}
\end{equation}

\noindent \textbf{Scenario 1 (Sc1): A local control strategy in the departure area to reduce stress levels in $\Omega_1$ and its impact on $\Omega_2$.}
In this first scenario, we consider only the control \(\mathbf{u}_1\) which acts on the stressed population in the departure area of \(\Omega_1\). We assume that \(\mathbf{u}_2 = 0\) in \eqref{control Model}.

\noindent\textbf{Scenario 2 (Sc2): A local control strategy in the arrival area to reduce stress levels in $\Omega_2$.}
In this second scenario, we consider this time only the control \(\mathbf{u}_2\) which acts on the stressed population in the arrival area of \(\Omega_2\). We assume that \(\mathbf{u}_1 = 0\) in \eqref{control Model}.\\

In the sequel, we give our numerical results showing the spatiotemporal evolution of our unidirectional model case, as well as comparison between this model case with respect to the two control scenarios.
\begin{figure}[h!]
	\centering
	\begin{tabular}{cc}
 		\subcaptionbox{Time evolution of the total mass $ M_{P_1}(t)=\int_{\Omega_1} u_{P_1}(t,x) dx $ in the zone $\Omega_1$.}{\includegraphics[width=0.5\linewidth, height=0.18\textheight]{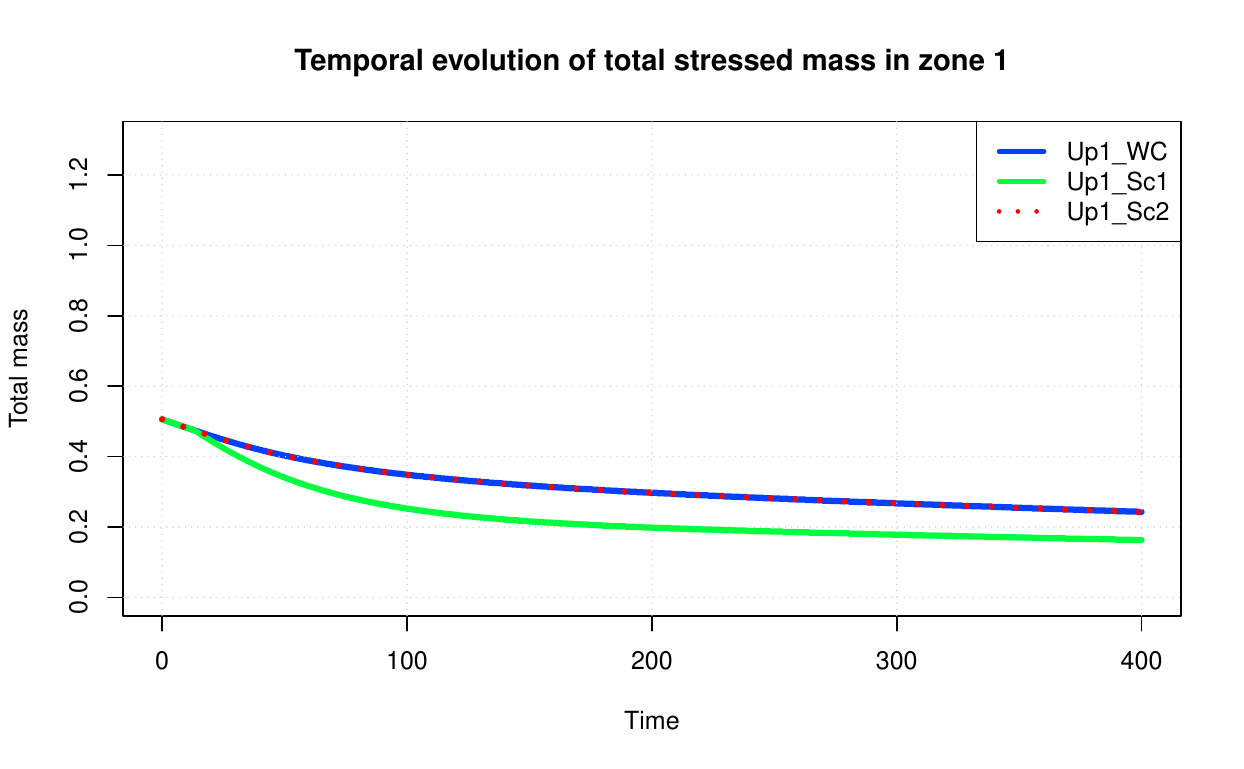}\label{pop-1}} & 
		\subcaptionbox{Time evolution of the total mass $ M_{P_2}(t)=\int_{\Omega_2} u_{P_2}(t,x) dx $ in the zone $\Omega_2$.}{\includegraphics[width=0.5\linewidth, height=0.18\textheight]{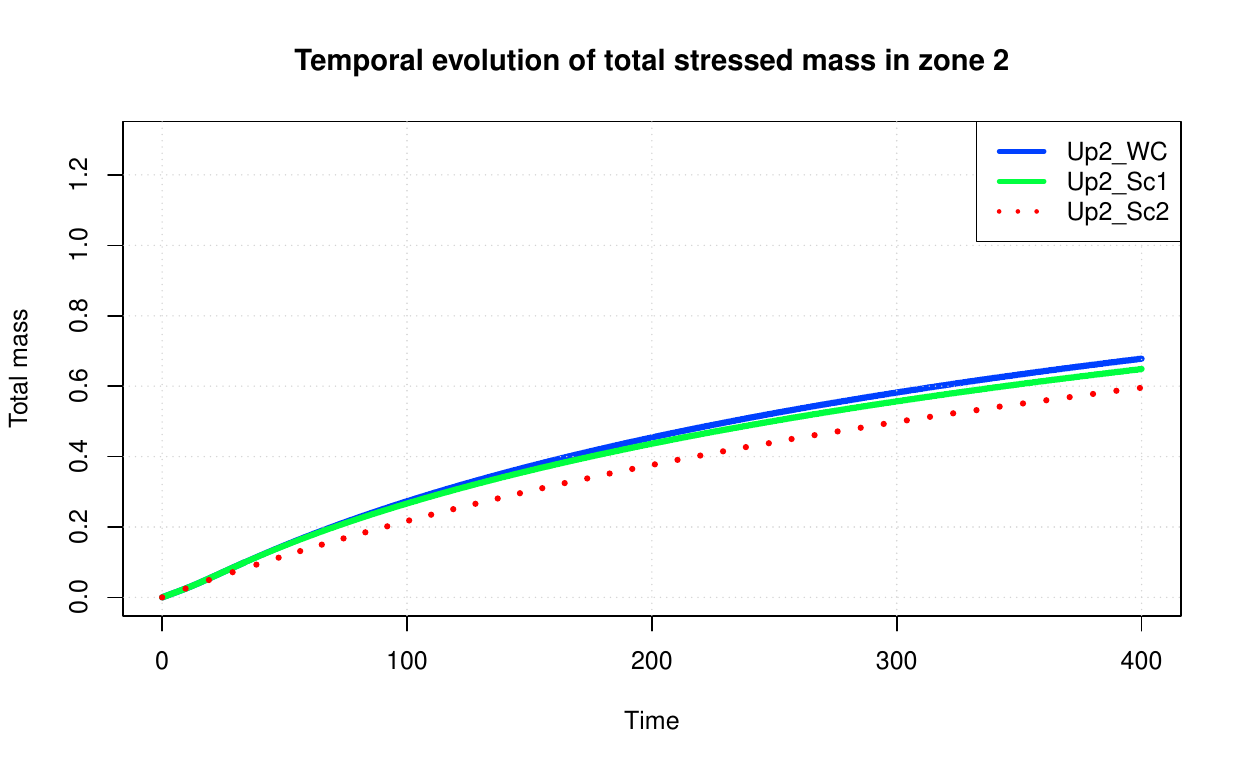}} 
	\end{tabular}
	\caption{Temporal evolution of stressed populations on both zones $\Omega_1$ (left) and $\Omega_2$ (right) for the three model cases (Wc), (Sc1) and (Sc2). The density in blue (Wc) represents the situation without control, the density in green represents the situation with only control $\bold{u}_1$, and the density in dashed red represents the situation with control $\bold{u}_2$.}
	\label{pop-fig1}
\end{figure}

\begin{figure}[h!]
	\centering
	\begin{tabular}{cc}
		\subcaptionbox{Time evolution of total stressed mass $ M_{P}(t)=\int_{\Omega_1} u_{P_1}(t,x) dx + \int_{\Omega_2} u_{P_2}(t,x) dx $ in the whole network.}{\includegraphics[width=0.5\linewidth, height=0.18\textheight]{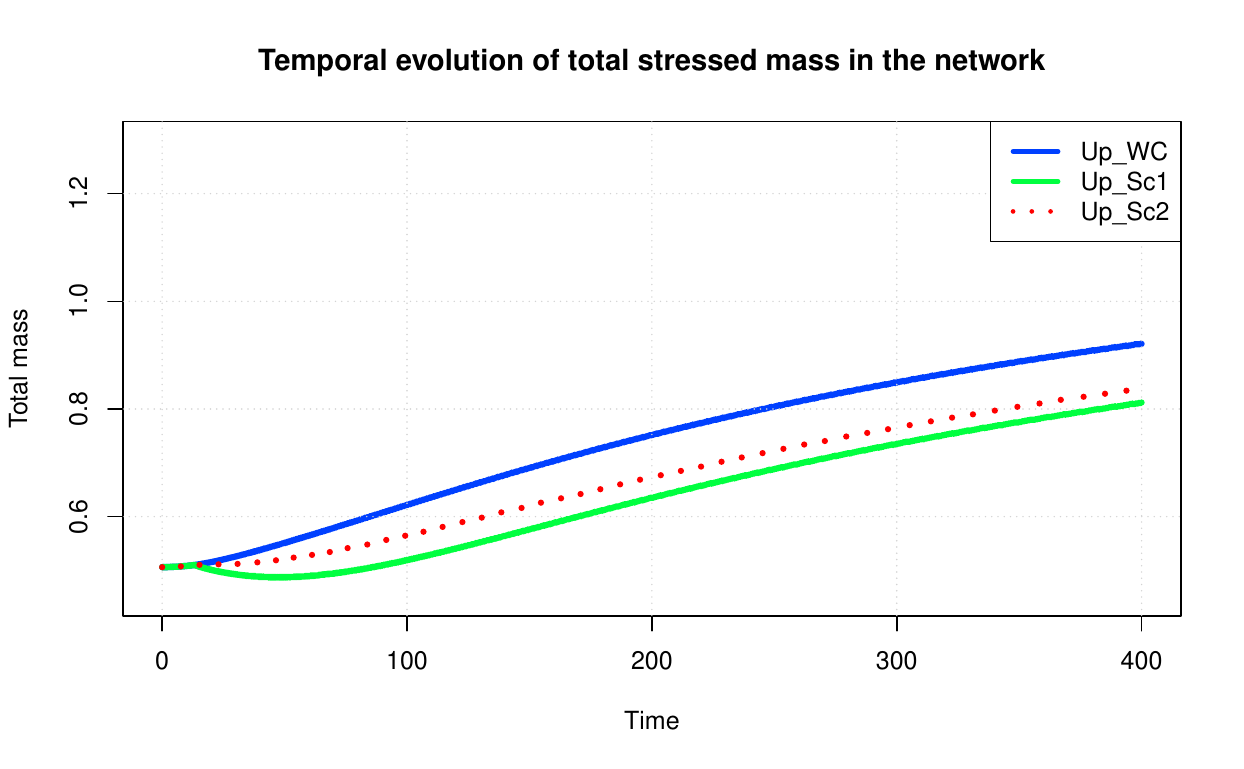}\label{pop}} & 
		\subcaptionbox{Time evolution of total stressed mass $ M_{N}(t)=\int_{\Omega_1} u_{N_1}(t,x) dx + \int_{\Omega_2} u_{N_2}(t,x) dx $ in the whole network.}{\includegraphics[width=0.5\linewidth, height=0.18\textheight]{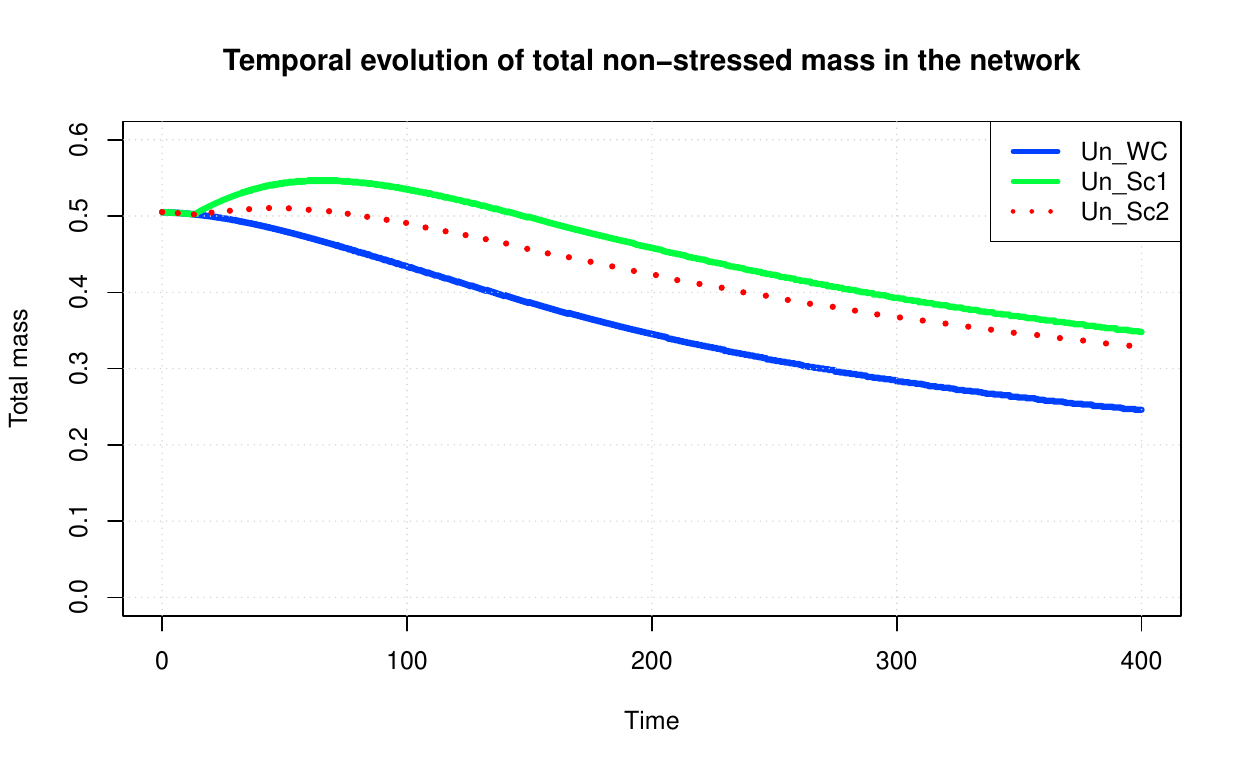}} 
	\end{tabular}
	\caption{Temporal evolution of non-stressed population on both zones $\Omega_1$ (left) and $\Omega_2$ (right) for the three model cases (Wc), (Sc1) and (Sc2). The density in blue (Wc) represents the situation without control, the density in green represents the situation with only control $\bold{u}_1$, and the density in dashed red represents the situation with control $\bold{u}_2$.}
	\label{pop-fig2}
\end{figure}

\begin{figure}[h!]
\begin{minipage}[c]{.32\linewidth}
\begin{center}
Without control (Wc)
\end{center}
\includegraphics[width=4cm,height=2cm]{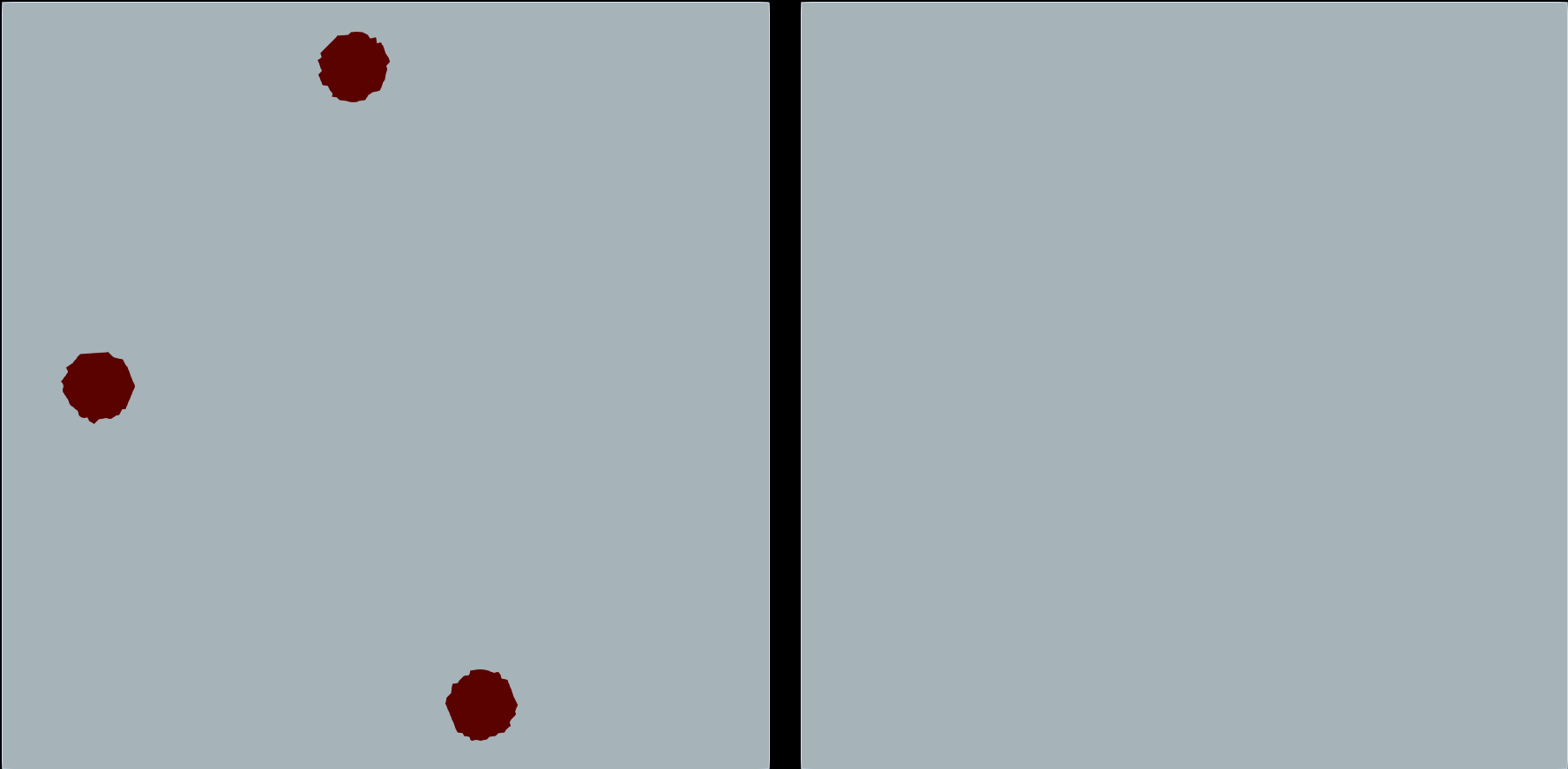}
\end{minipage}
\begin{minipage}[c]{.32\linewidth}
\begin{center}
First scenario (Sc1)
\end{center}
\includegraphics[width=4cm,height=2cm]{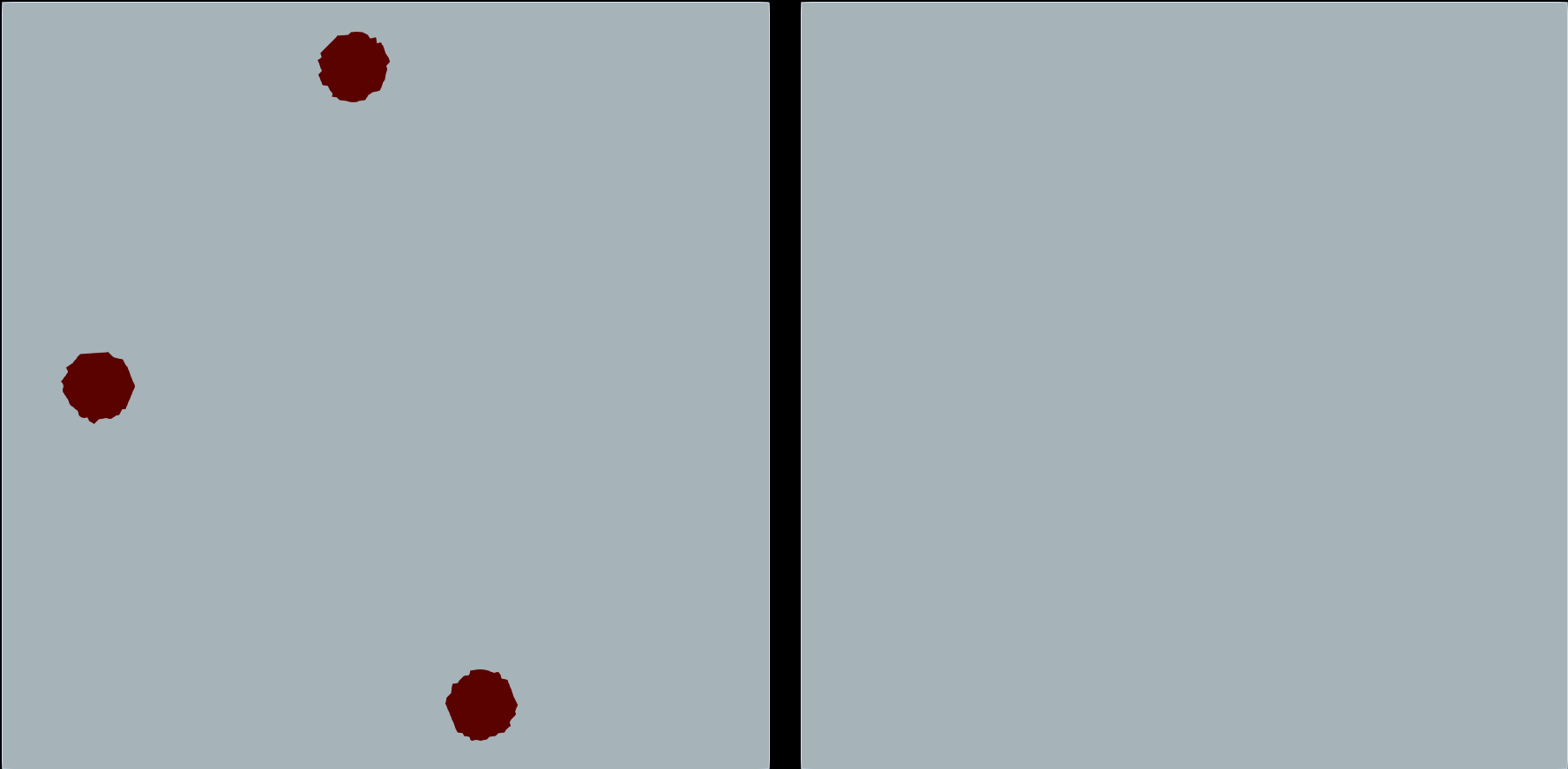}
\end{minipage}
\begin{minipage}[c]{.32\linewidth}
\begin{center}
Second scenario (Sc2)
\end{center}
\includegraphics[width=4cm,height=2cm]{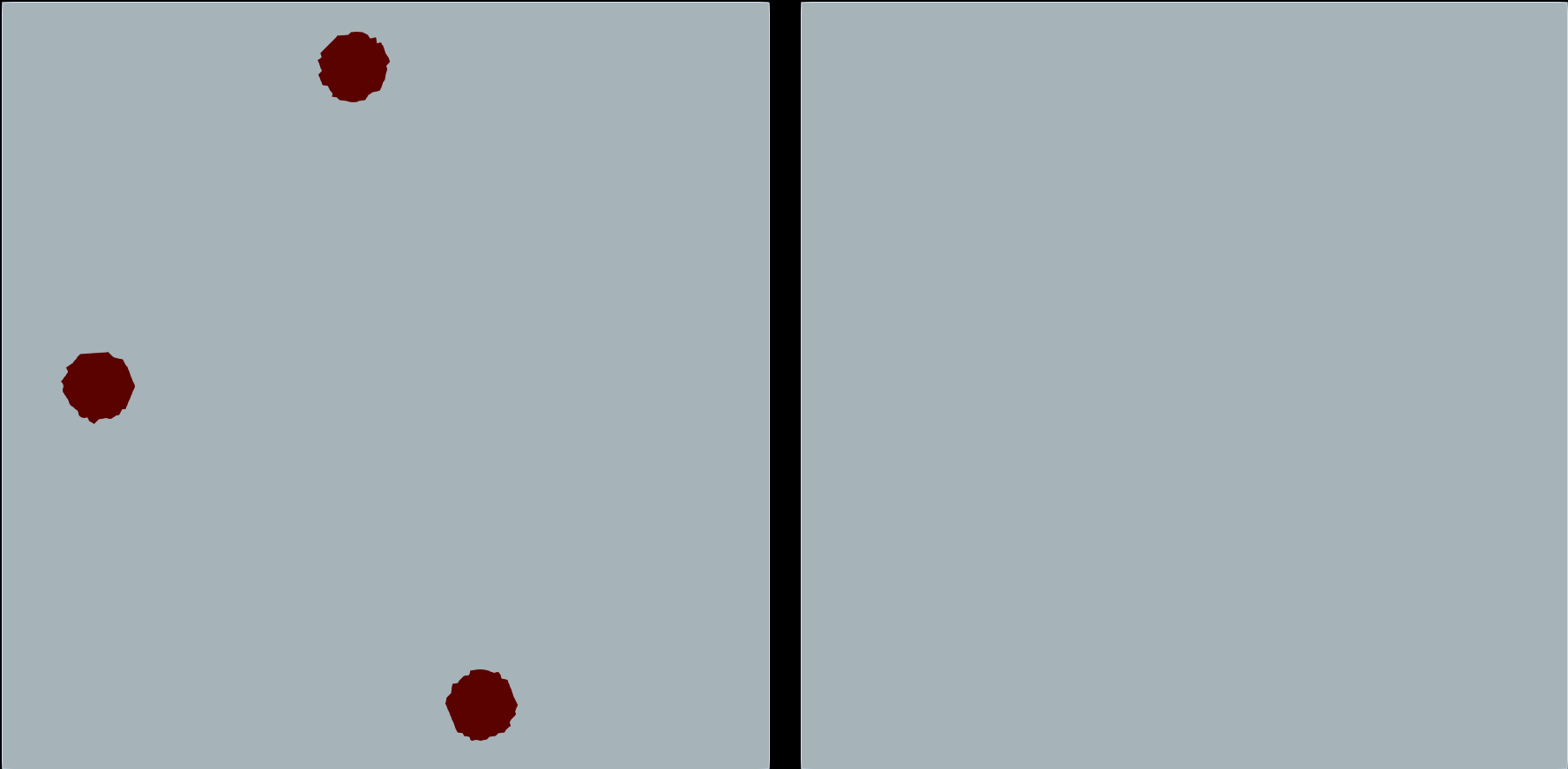}
\end{minipage}
\begin{center}
 $t=0$
\end{center}
\begin{minipage}[c]{.32\linewidth}
\includegraphics[width=4cm,height=2cm]{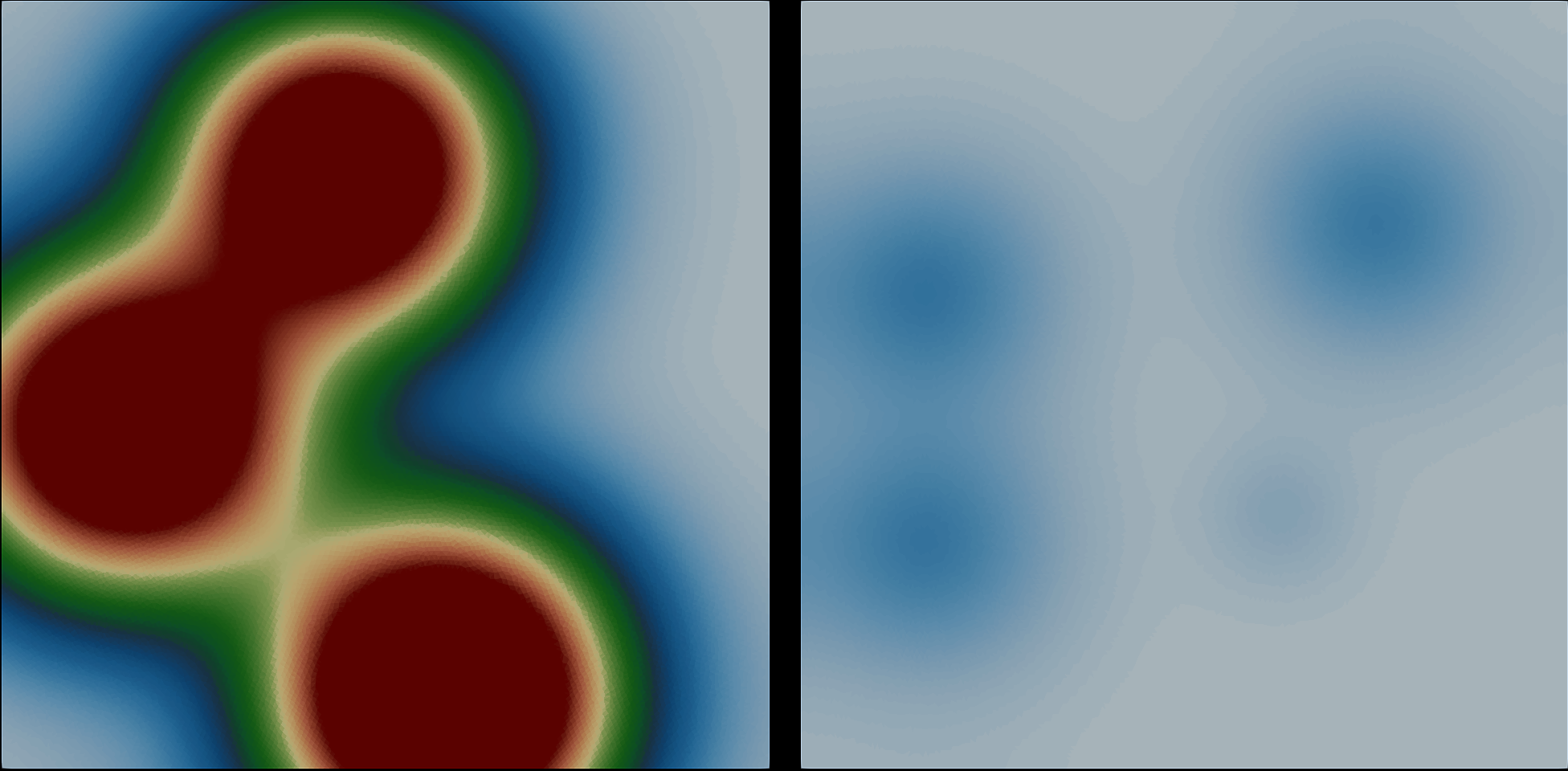}
\end{minipage}
\begin{minipage}[c]{.32\linewidth}
\includegraphics[width=4cm,height=2cm]{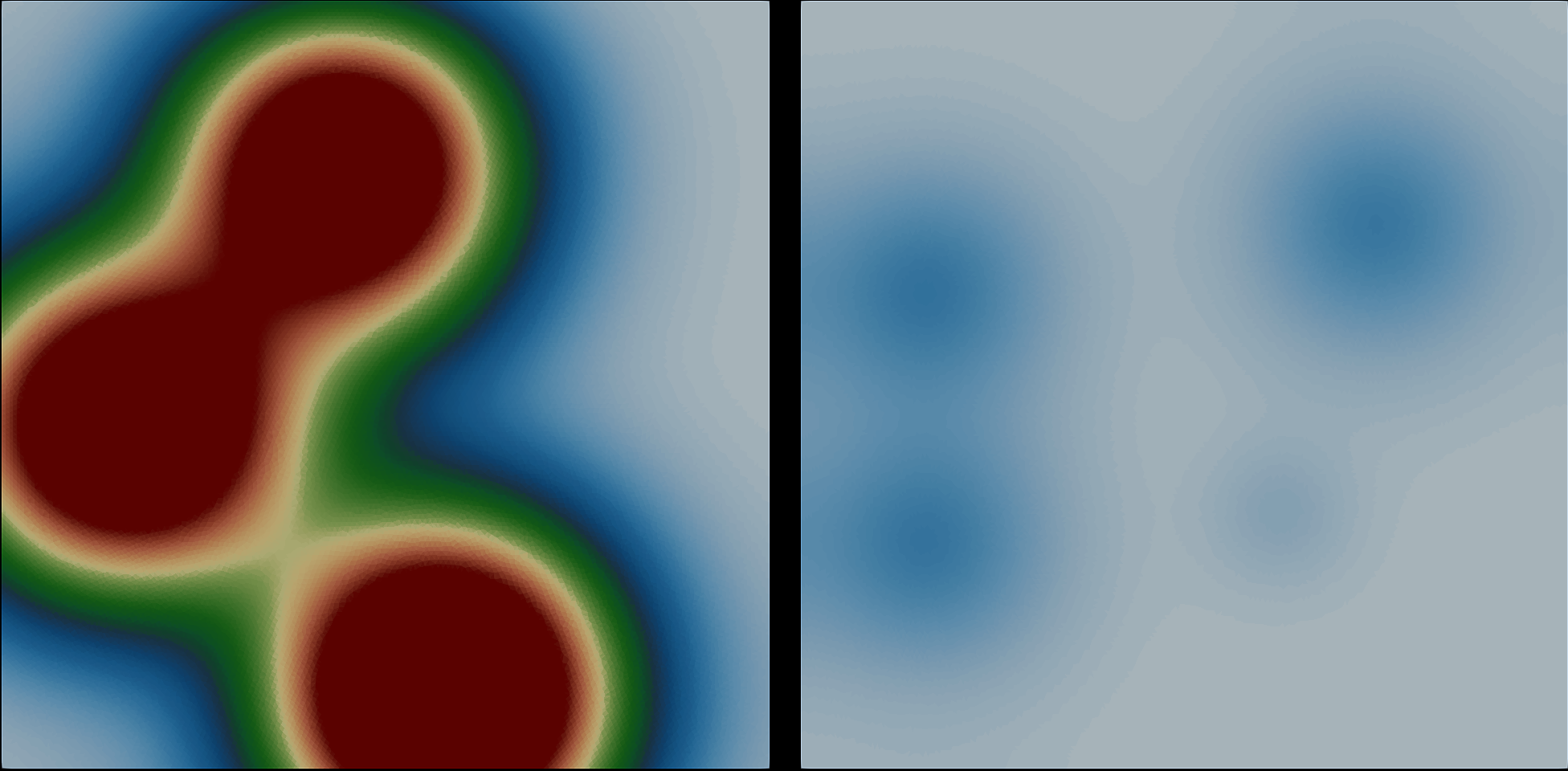}
\end{minipage}
\begin{minipage}[c]{.32\linewidth}
\includegraphics[width=4cm,height=2cm]{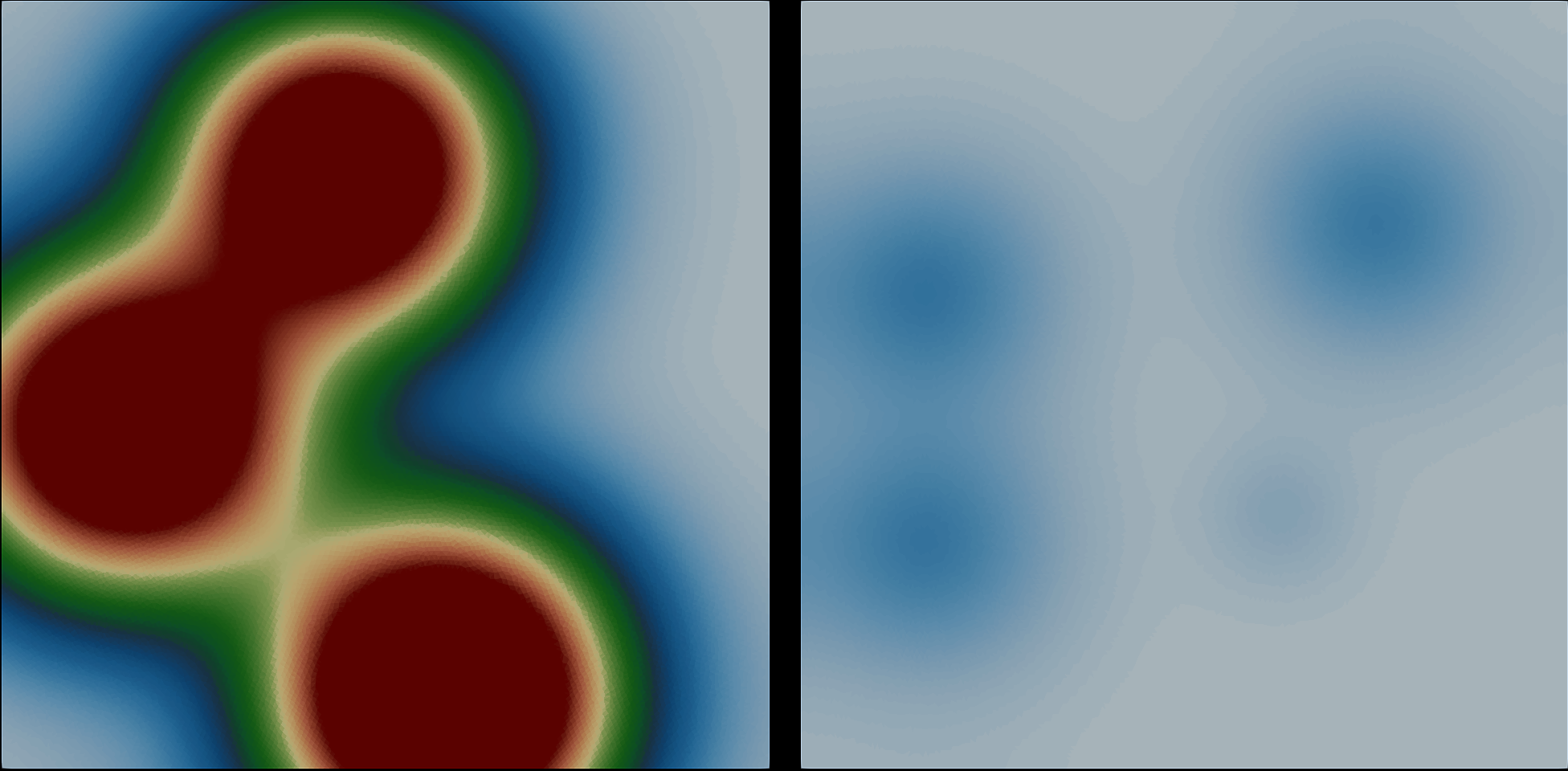}
\end{minipage}
\begin{center}
$t=10$
\end{center}
\begin{minipage}[c]{.32\linewidth}
\includegraphics[width=4cm,height=2cm]{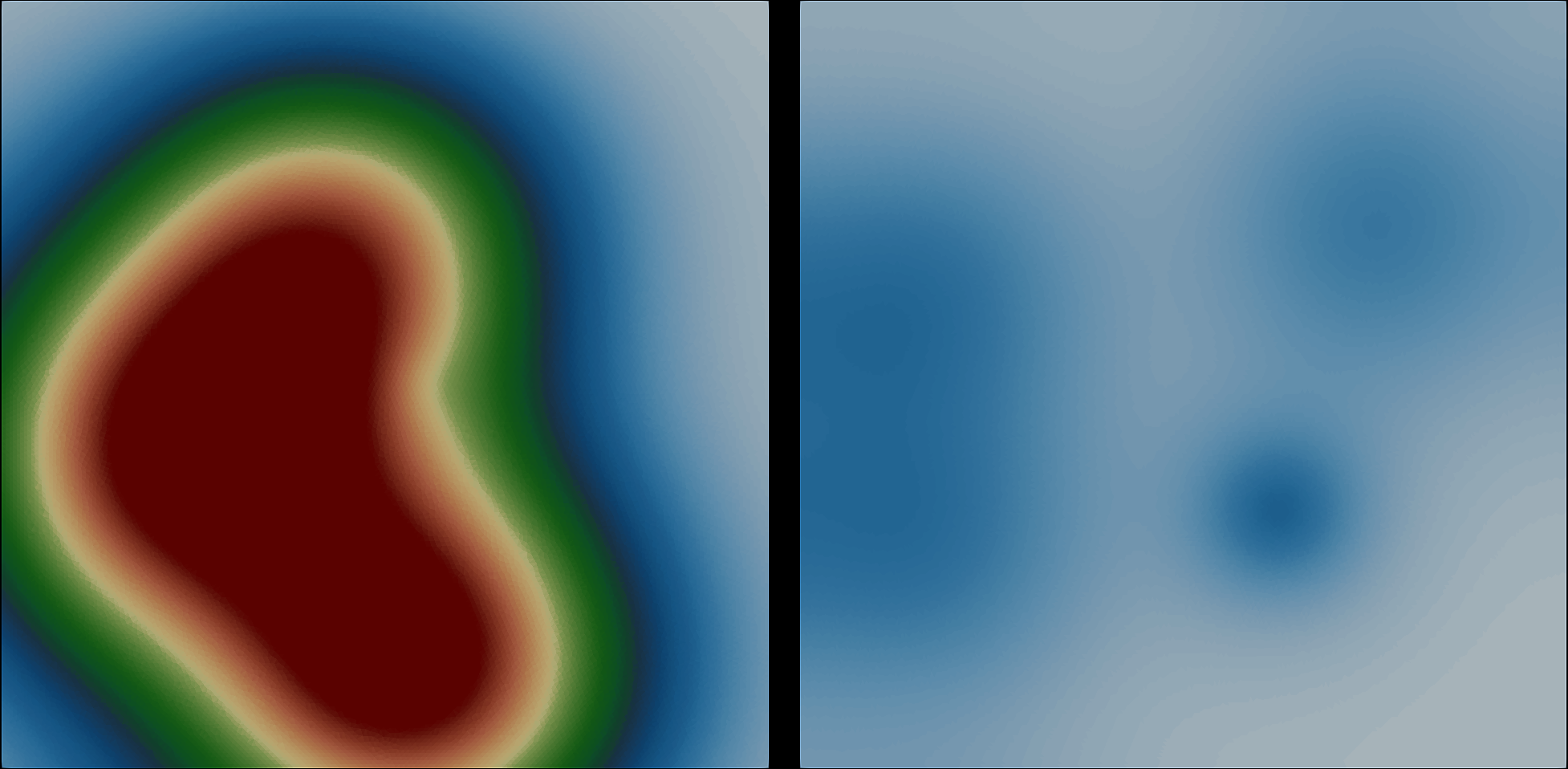}
\end{minipage}
\begin{minipage}[c]{.32\linewidth}
\includegraphics[width=4cm,height=2cm]{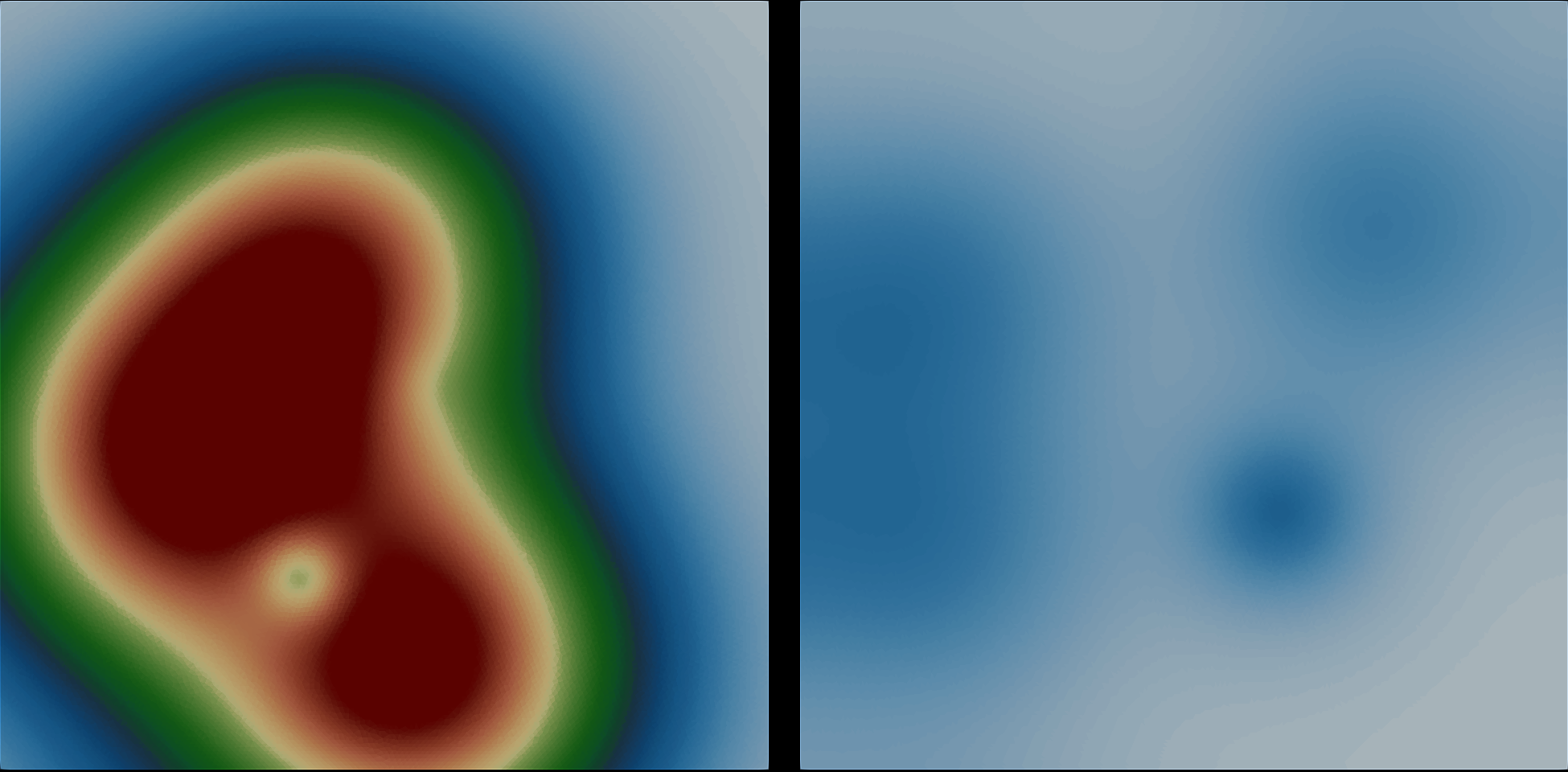}
\end{minipage}
\begin{minipage}[c]{.32\linewidth}
\includegraphics[width=4cm,height=2cm]{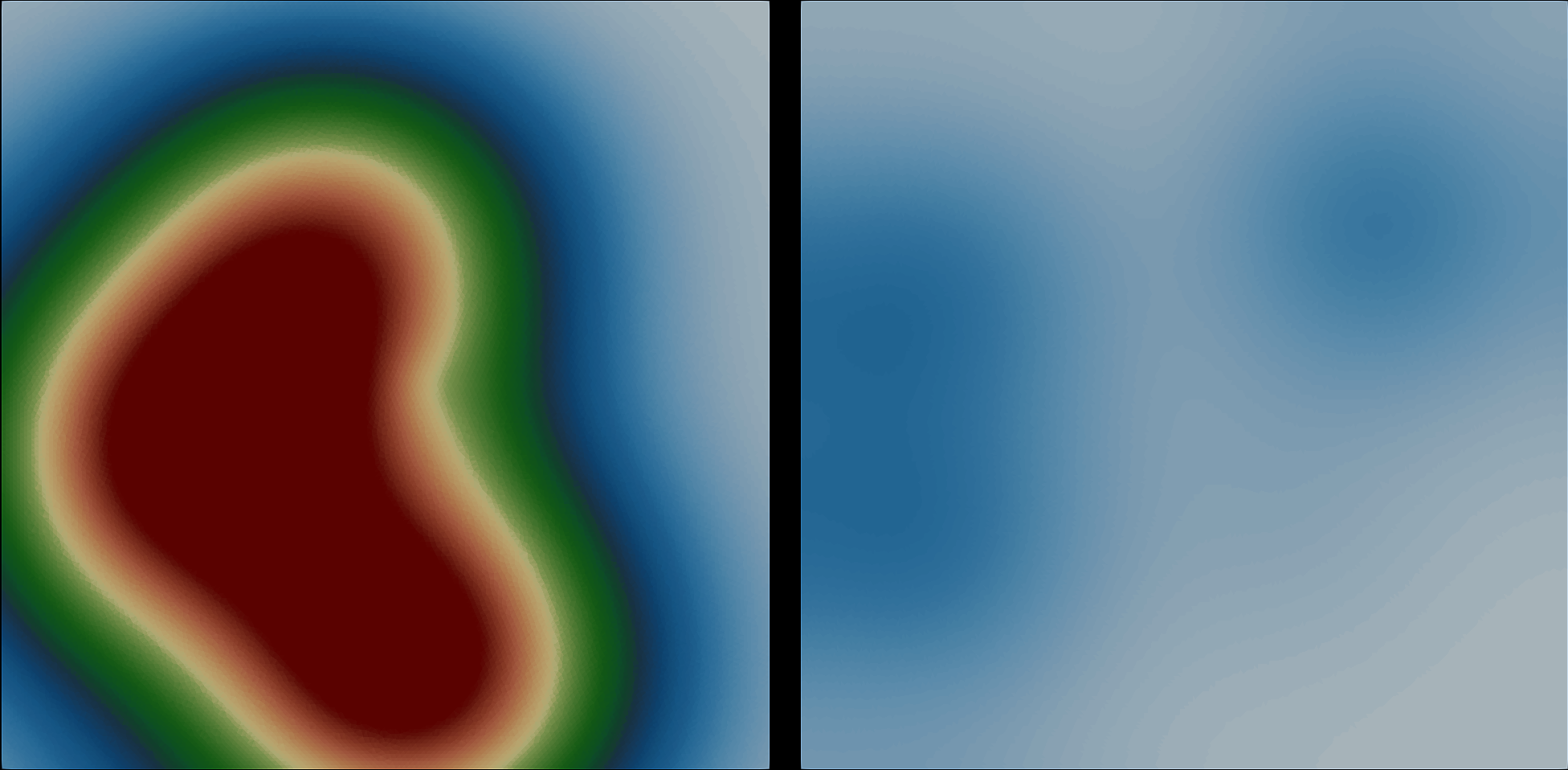}
\end{minipage}
\begin{center}
$t=20$
\end{center}
\begin{minipage}[c]{.32\linewidth}
\includegraphics[width=4cm,height=2cm]{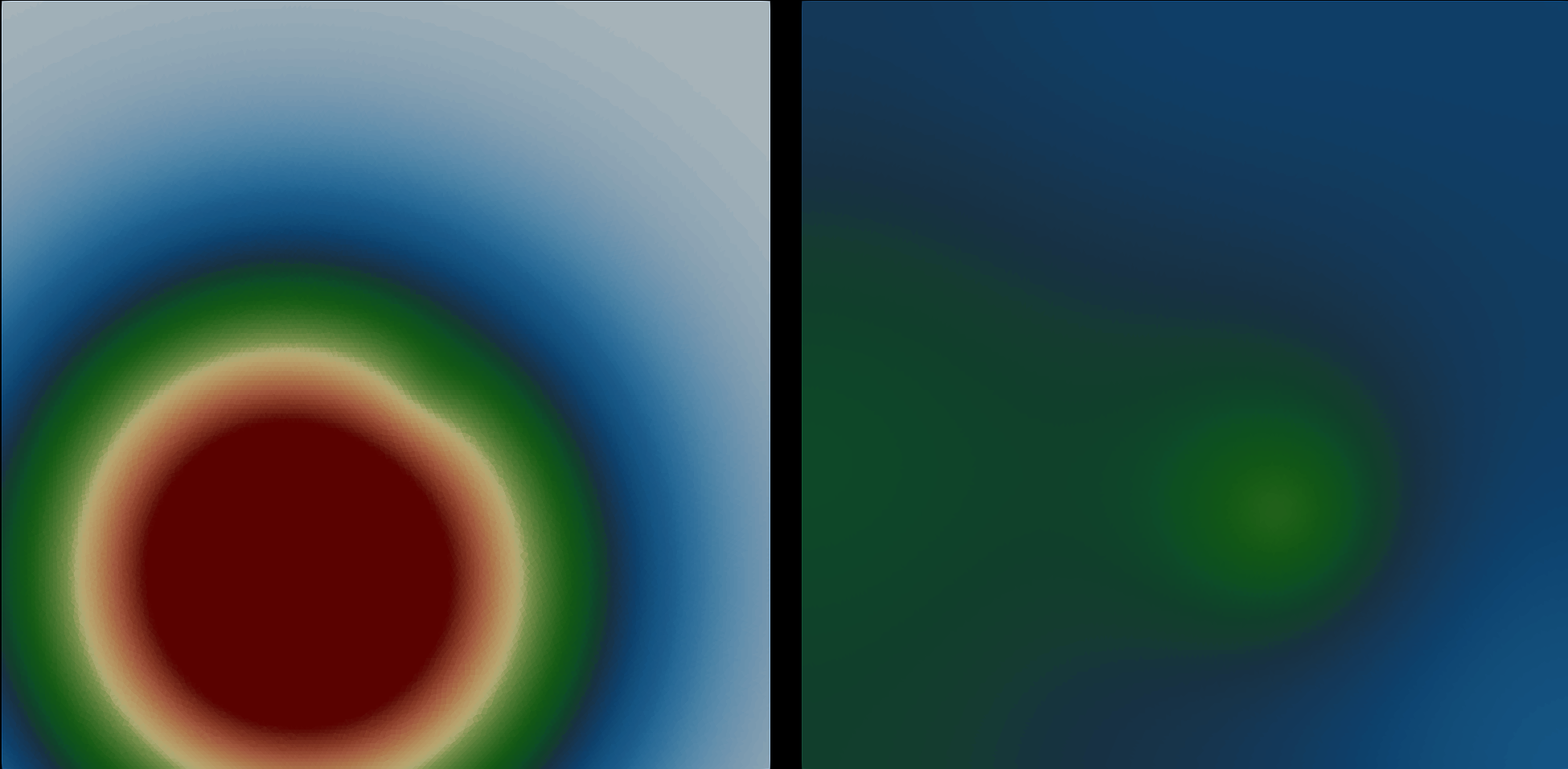}
\end{minipage}
\begin{minipage}[c]{.32\linewidth}
\includegraphics[width=4cm,height=2cm]{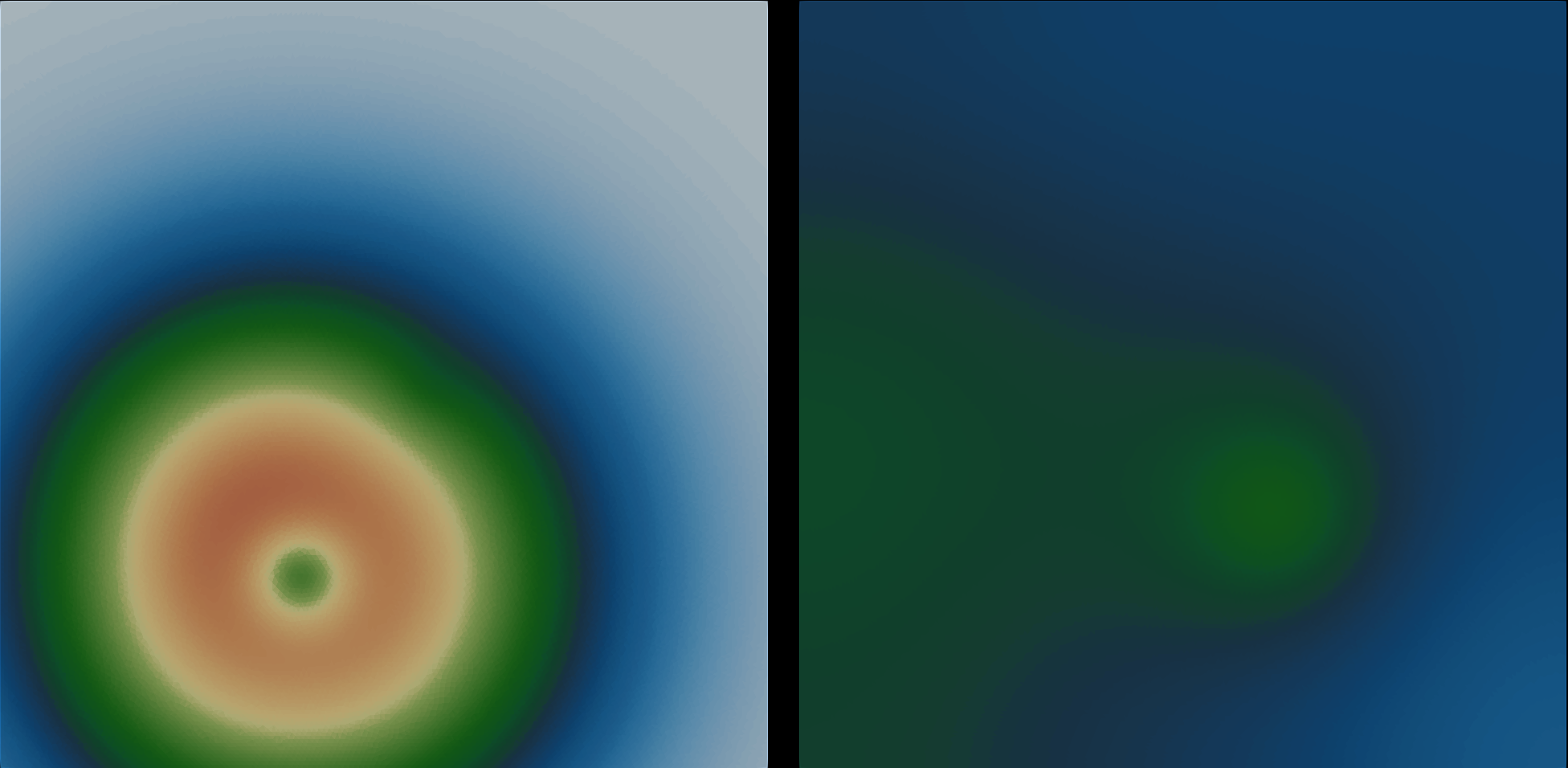}
\end{minipage}
\begin{minipage}[c]{.32\linewidth}
\includegraphics[width=4cm,height=2cm]{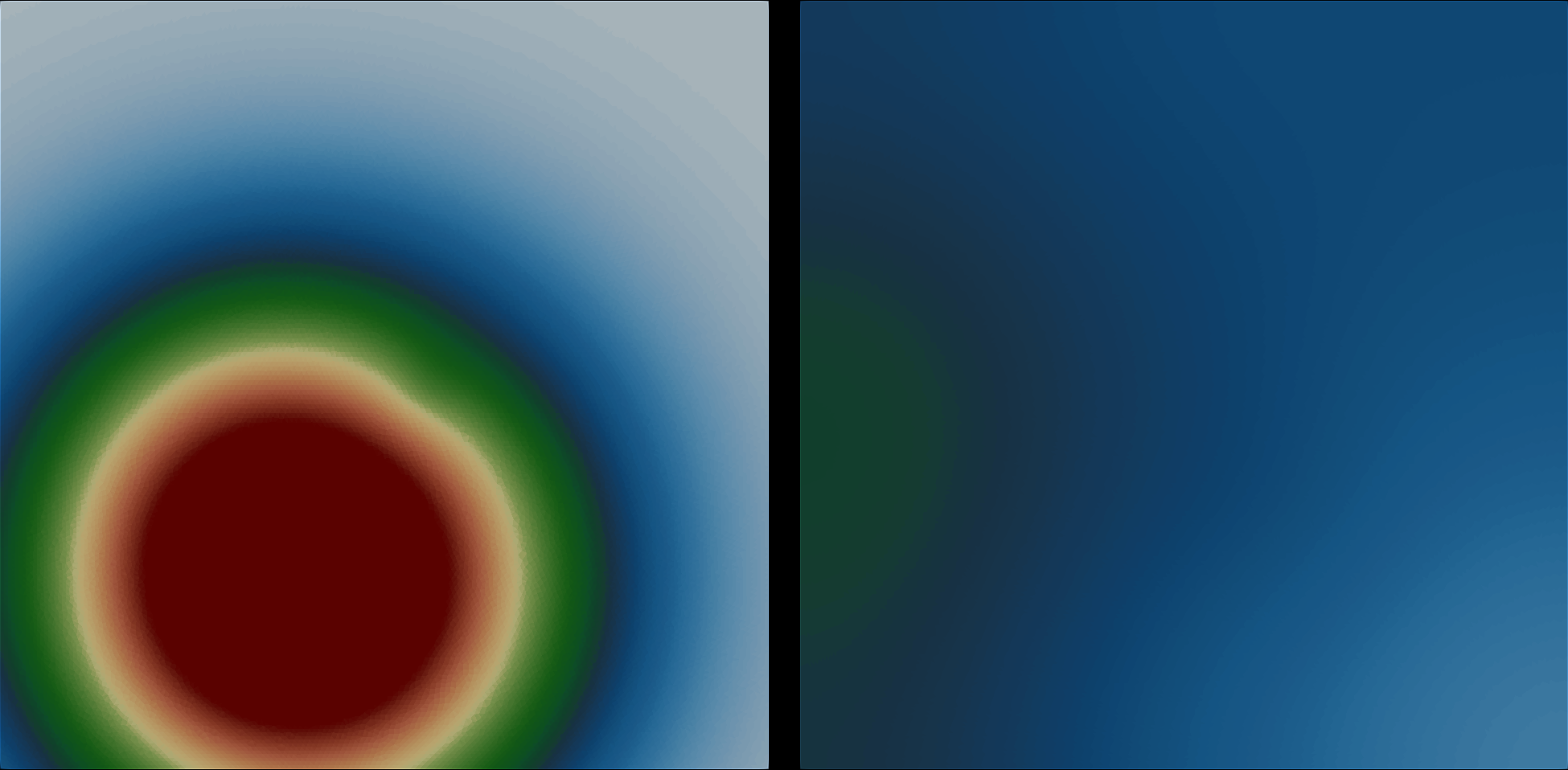}
\end{minipage}
\begin{center}
 $t=100$
\end{center}
\begin{minipage}[c]{.32\linewidth}
\includegraphics[width=4cm,height=2cm]{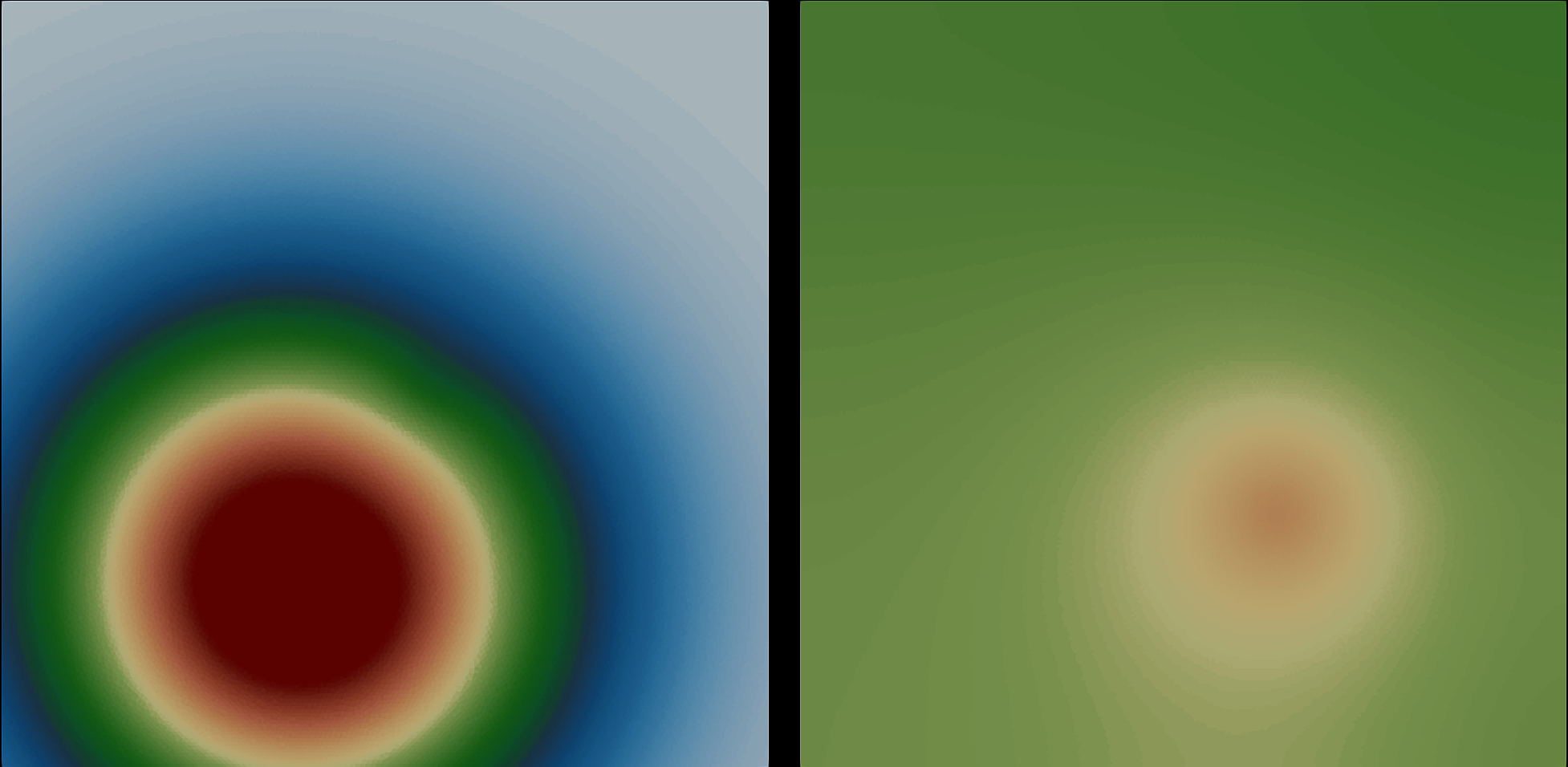}
\end{minipage}
\begin{minipage}[c]{.32\linewidth}
\includegraphics[width=4cm,height=2cm]{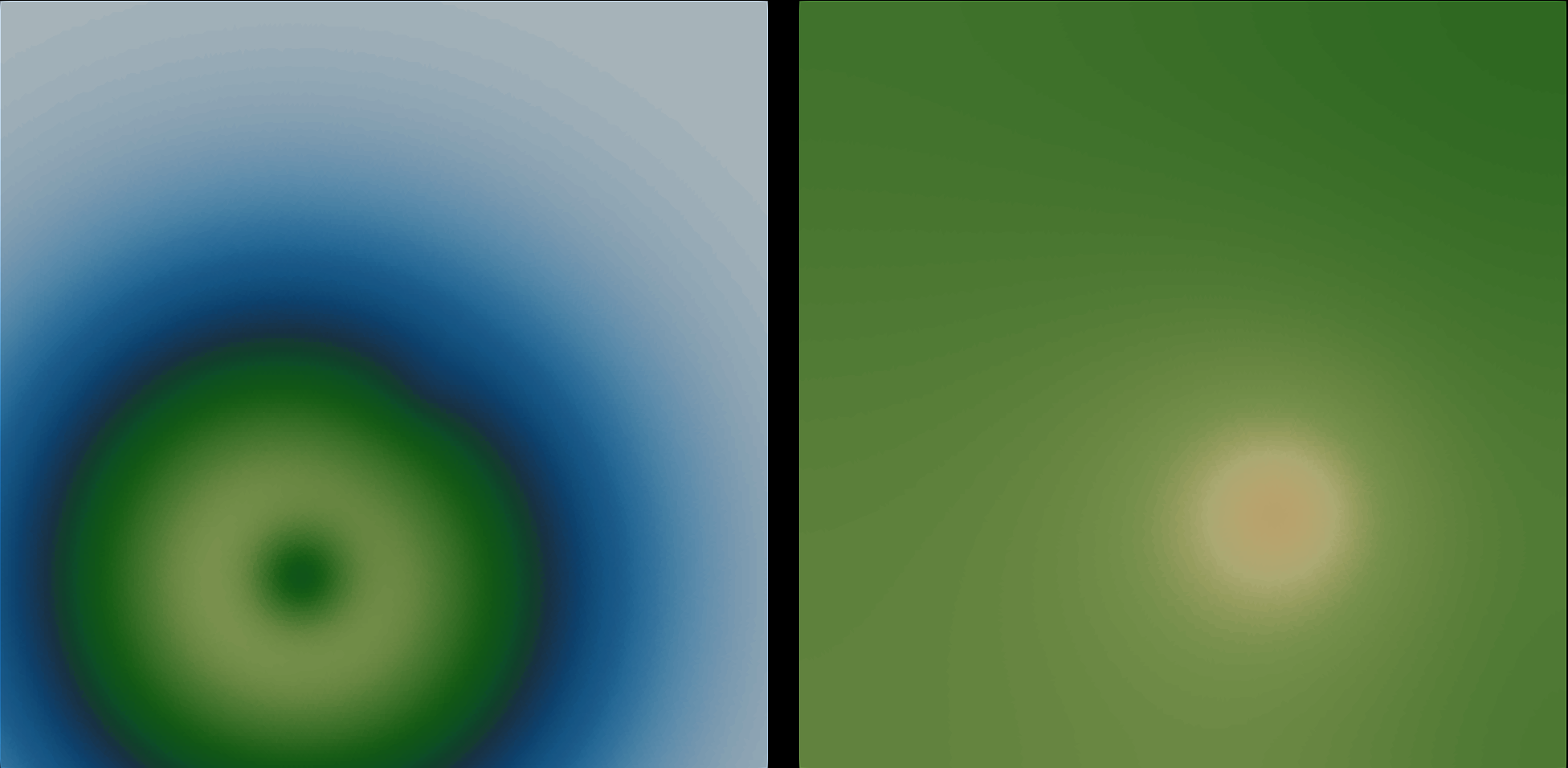}
\end{minipage}
\begin{minipage}[c]{.32\linewidth}
\includegraphics[width=4cm,height=2cm]{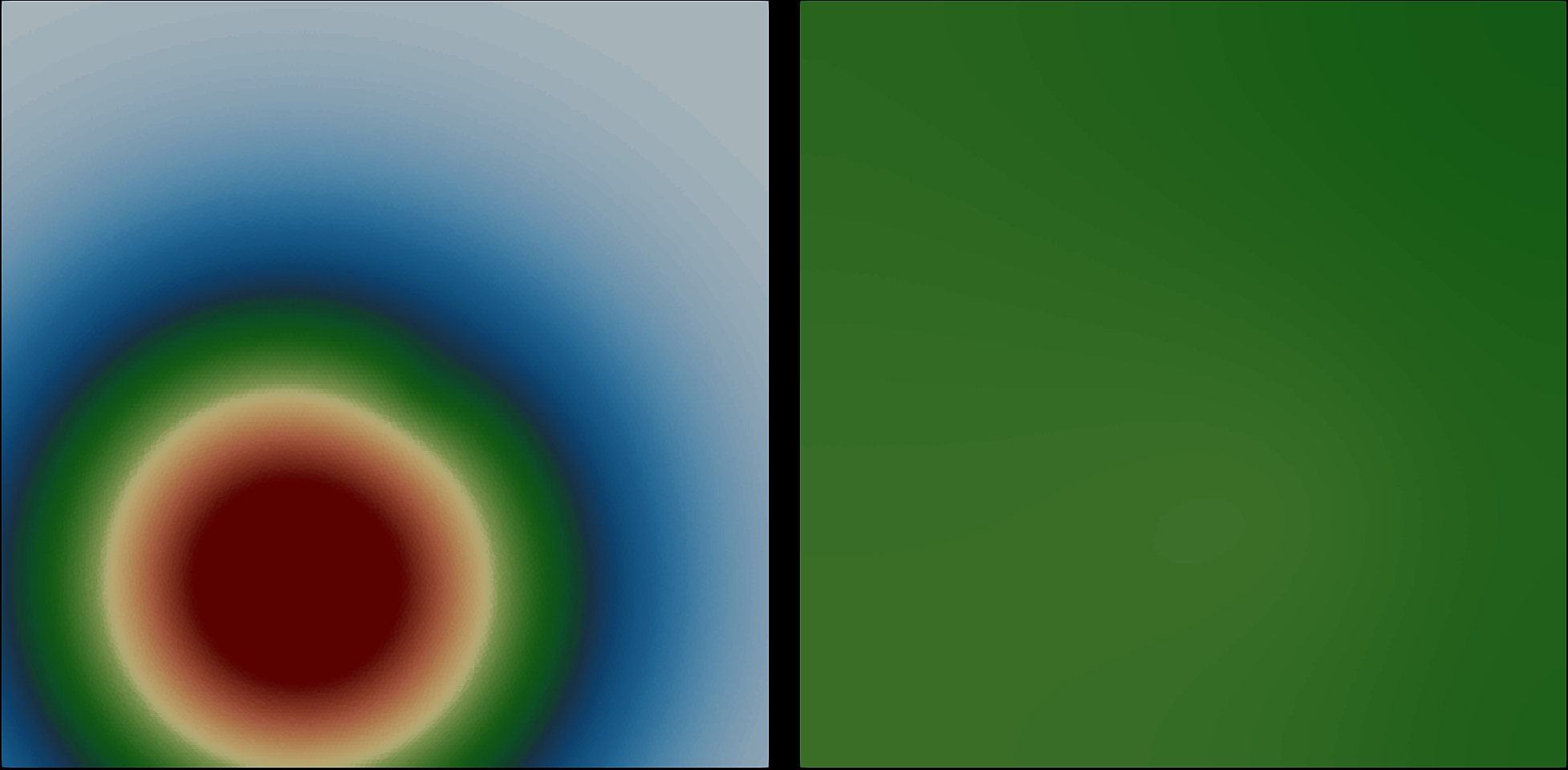}
\end{minipage}
\begin{center}
 $t=250$
\end{center}
\begin{minipage}[c]{.32\linewidth}
\includegraphics[width=4cm,height=2cm]{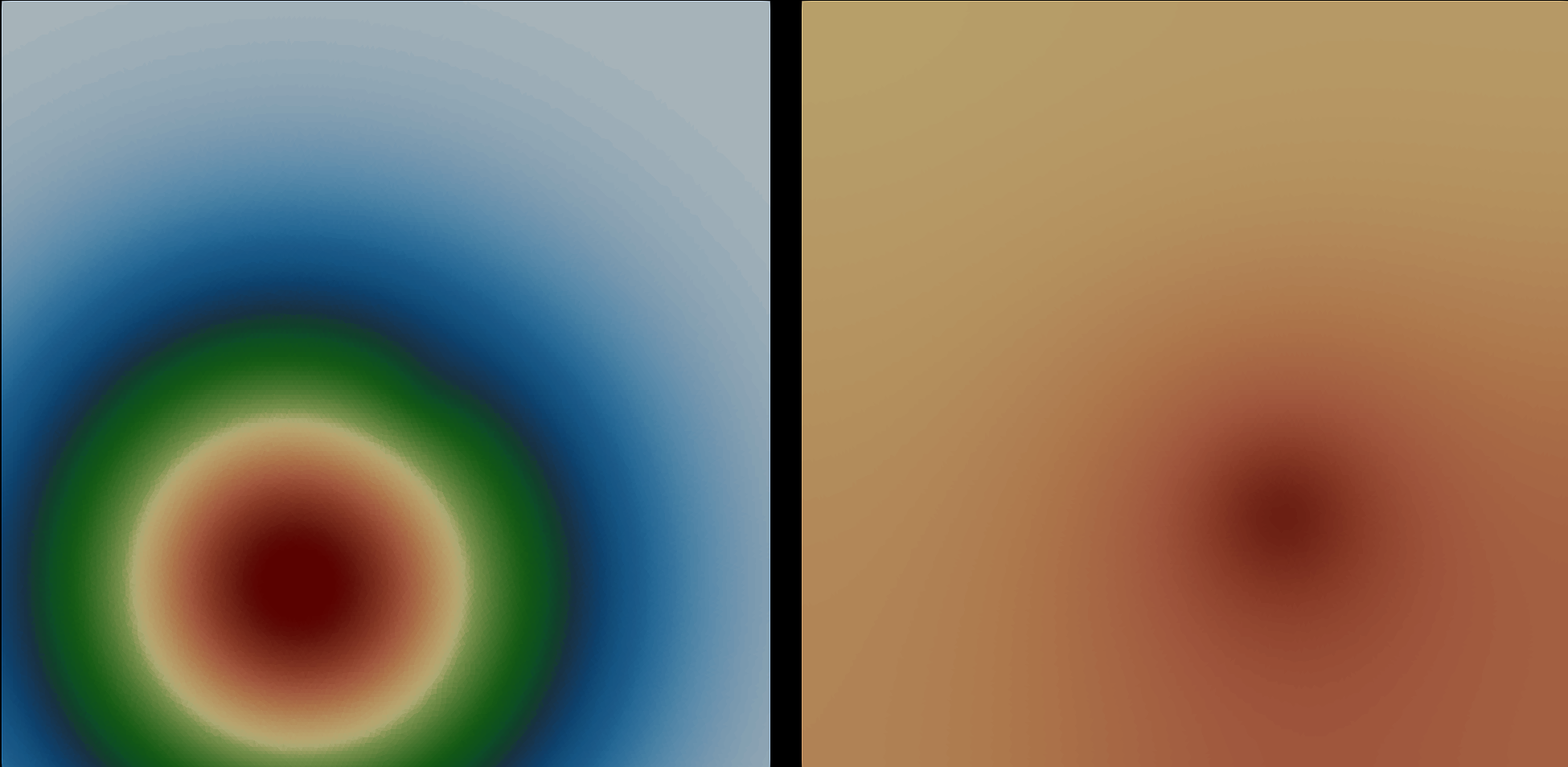}
\end{minipage}
\begin{minipage}[c]{.32\linewidth}
\includegraphics[width=4cm,height=2cm]{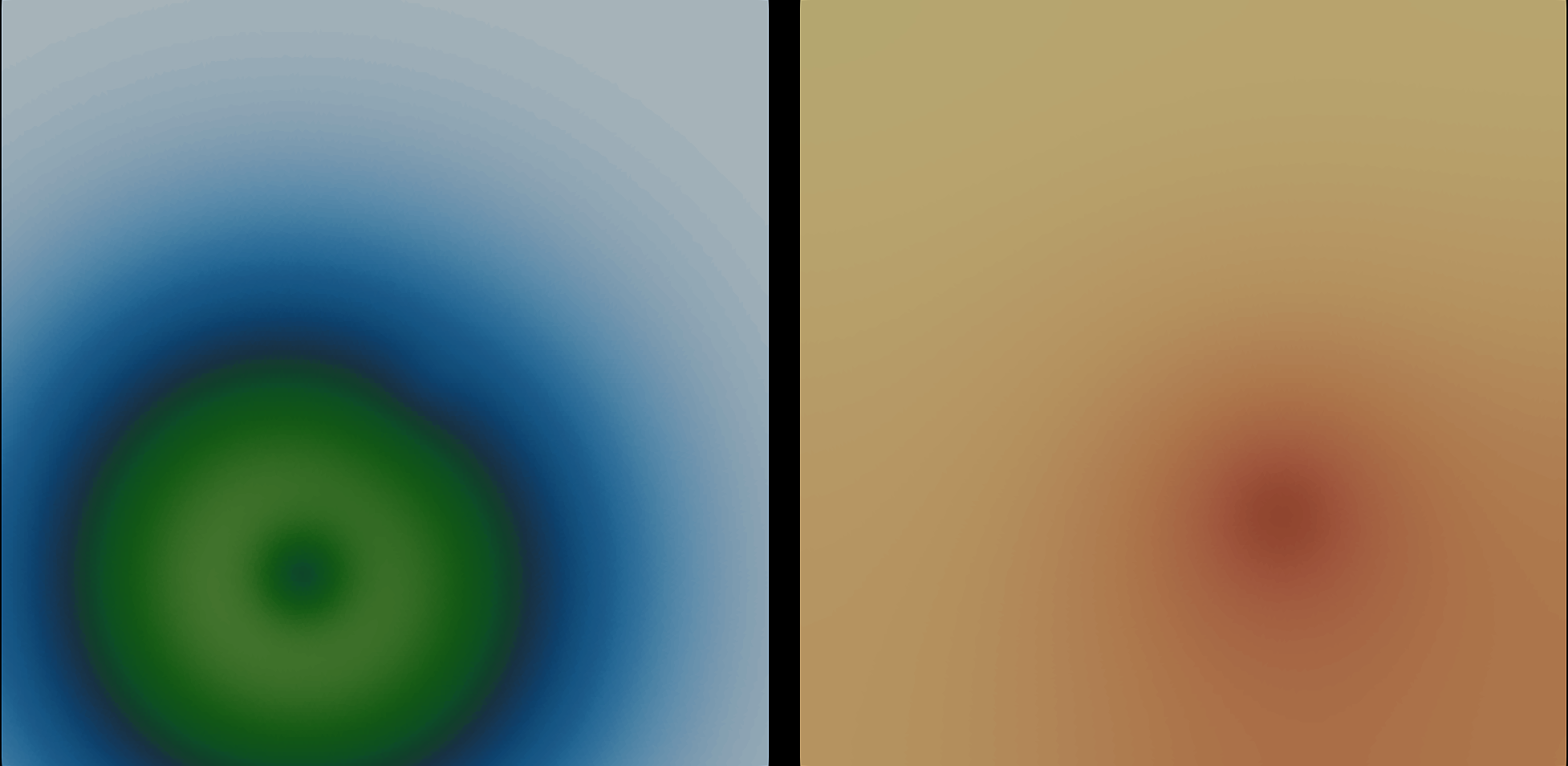}
\end{minipage}
\begin{minipage}[c]{.32\linewidth}
\includegraphics[width=4cm,height=2cm]{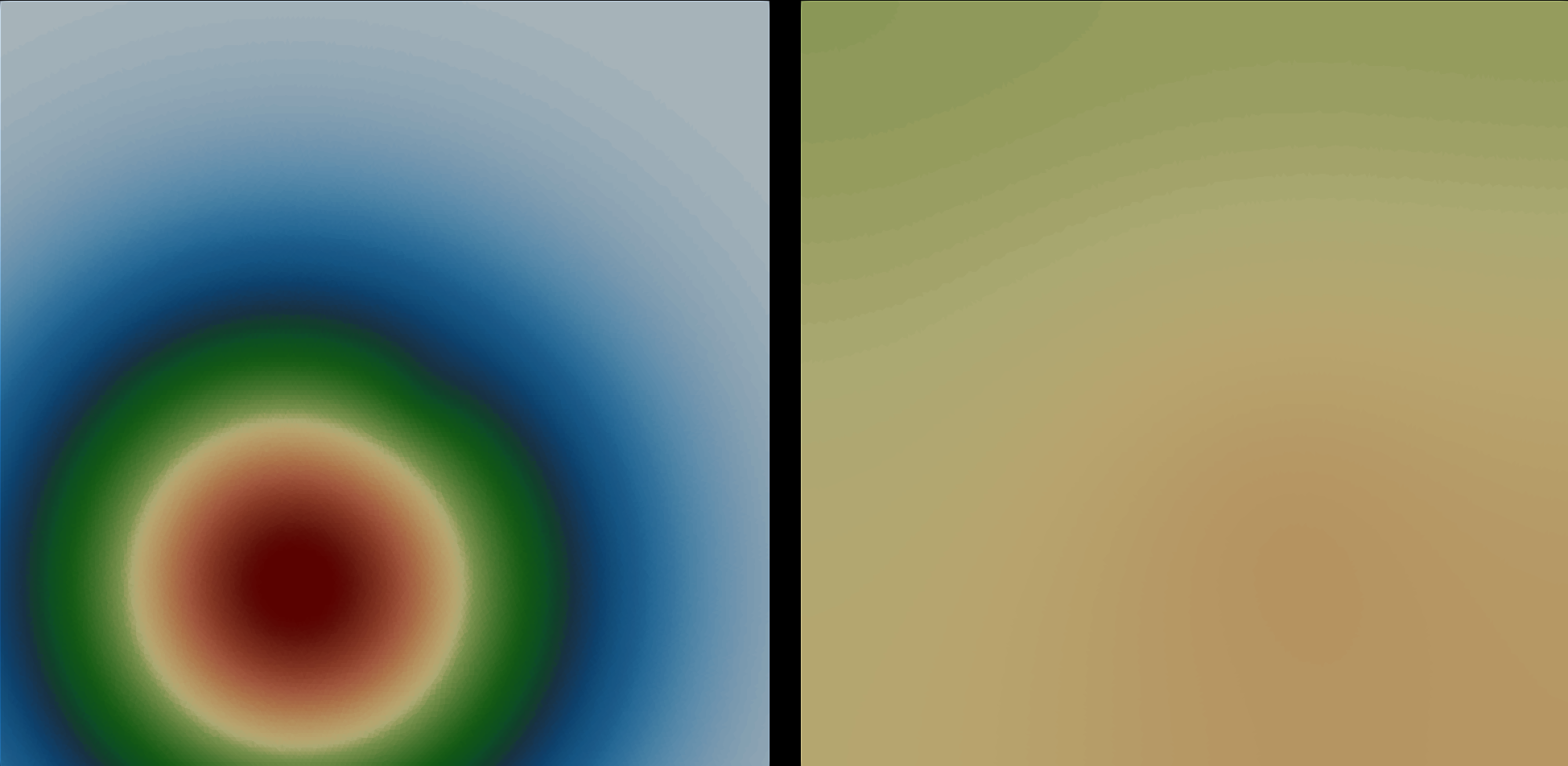}
\end{minipage}
\begin{center}
 $t=400$
\end{center}

\begin{center}
\begin{minipage}[c]{.5\linewidth}
\includegraphics[width=6cm,height=1.5cm]{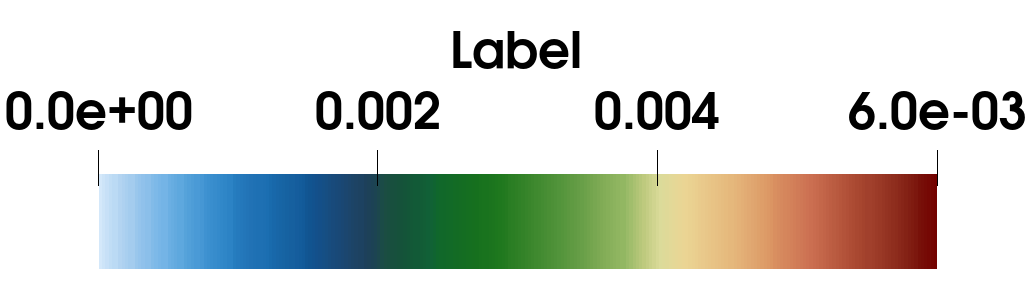}
\end{minipage}
\end{center}
\caption{{\small{This figure illustrates our numerical simulations described in Section \ref{sec:4.1}. Each column corresponds to a different model case. The first column (Wc) shows the evolution of the unidirectional model under consideration. The second column (Sc1) represents the first scenario, where a local control strategy is implemented in the departure area. The third column (Sc2) represents the second scenario, where a local control strategy is applied in the arrival area. The rows represent different time snapshots, highlighting the spatiotemporal evolution of stress within the domains $\Omega_1$ and $\Omega_2$.
}}}
\label{Others behaviors}
\end{figure}
\newpage
\clearpage
\subsection{Discussion}
Our numerical simulations reveal the intricate spatiotemporal dynamics of a population situated within interconnected zones during a dangers situation. These simulations account for diffusion, advection, transition, and imitation within each zone, alongside unidirectional migration from zone $\Omega_1$ to zone $\Omega_2$. Given that the population considered is of a low-risk culture, we focus on the propagation of stress within the two zones and its transfer from $\Omega_1$ to $\Omega_2$. The disaster is assumed to occur in $\Omega_1$, where the entire population is initially stressed, while in $\Omega_2$, the population begins in a non-stressed state. 

In Figure \ref{Others behaviors}, we can observe the spatiotemporal behavior of our unidirectional model, as well as the two control scenarios, allowing us to compare the three model cases: (Wc), (Sc1), and (Sc2). We specify the following observations with respect to each parameter:

\textbf{Diffusion:} Population diffusion occurs randomly within each zone. This can be observed in $\Omega_1$ from $t=0$ to $t=10$ in all scenarios (Wc), (Sc1) and (Sc2), and in $\Omega_2$ as the population spreads from the arrival area throughout the domain, particularly from $t=20$ to $t=400$, see (Wc).

\textbf{Advection:} In $\Omega_1$, from $t=0$ to $t=250$ in the three scenarios (Wc), (Sc1) and (Sc2),  we can see that the stressed population clearly moves towards the target departure area given in Figure \ref{fig:zone-dep-arr}.

\textbf{Transition and Imitation:} These dynamics are evident in $\Omega_2$, where the initial population is entirely non-stressed. The progression of stress levels due to imitation and transition is observable from $t=0$ to $t=400$, especially in the first case (Wc). This is further corroborated by the time evolution diagrams (see Figures \ref{pop-fig1} and \ref{pop-fig2}).

\textbf{First Control Scenario:} Implementing control in the departure area effectively reduces stress levels in $\Omega_1$. The control is initiated at time $T_0=5$ and ends at $T_1=20$. This reduction is evident when comparing the (Wc) scenario from $t=20$ where the control strategy is at the maximum to $t=400$ with the (Sc1) scenario over the same time period in $\Omega_1$, which subsequently allows for reduced stress levels in $\Omega_2$, see $t=400$ in (Wc) and (Sc1).

\textbf{Second Control Scenario:} Implementing control in the arrival area significantly reduces stress levels in $\Omega_2$. The control is initiated at time $T_0=10$ and reaches its maximum effect at $T_1=20$. This reduction is evident when comparing the (Wc) scenario from $t=20$ to $t=400$ with the (Sc2) scenario over the same time period in $\Omega_2$. \\
Moreover, by comparing the three cases (Wc), (Sc1), and (Sc2) at $t=400$, it is clear that the first control strategy is more effective in reducing stress levels in the whole network, while the second control strategy is more effective in reducing the stress levels in the zone $\Omega_2 $, this can be confirmed by the temporal evolution diagrams in Figures  \ref{pop-fig1} and \ref{pop-fig2}.

\section{Conclusion}\label{sec:5}

In this work, we introduced a compartmental advection-diffusion network model to describe the propagation of stress in a population situated in two interconnected spatial zones during a disaster situation. Our model accounts for interactions facilitated by intrinsic transitions and imitations within each zone, as well as migrations between these zones. By employing semigroup theory and abstract evolution equations, we proved the local existence, uniqueness, and regularity of the solutions. Furthermore, we established the positivity and $L^1$--boundedness of the solutions.

Through various numerical simulations, we illustrated different scenarios of stress propagation. We also implemented a local control strategy aimed at minimizing stress levels in a population facing a low-risk culture during a dangerous situation. The results demonstrate the effectiveness of the proposed model and control strategy in managing and mitigating stress propagation in interconnected zones during disaster situations.

Overall, our findings contribute to the understanding of stress dynamics and social behaviors of pedestrians in danger situations and provide a foundation for developing effective control strategies to manage stress levels in affected populations.
\newpage
\clearpage
\begin{appendices}
	\section{Table of numerical values }
	
	\begin{table}[h!]
		\caption{Parameters of the model \eqref{Main Model}}
		\label{tab:parameters-systeme-ACP-0}
		\centering
		\begin{tabular}{l| l| l}
			\textbf{Parameters}                            & \textbf{Zone 1}   &  \textbf{Zone 2}   \\
			
			\hline
			$d_{P_i}$	& $0.2$ & $0.15$ \\
			$d_{N_1}$ & $0.1$ & $0.05$ \\
			$v_{P_i,max}$ & $0.025$ & $0$ \\
			$v_{N_i,max}$ & $0.015$ & $0$ \\
			$a_i$       &        $0.01$    &        $0.005$          \\
			
			$b_i$       &        $0.005$      &         $0.0005$             \\
			$\alpha_{P_i}$  & $0.7$ & $0.5$\\
			
			$\alpha_{N_i}$  & $0.4$ & $0.4$\\
			
			$m_{ji}$  & $0.2$         &      $0.8$   \\
			$K_i$       &        $1$      &         $1$             \\
			
		\end{tabular}
	\end{table}
\end{appendices}

\end{document}